\documentclass[12pt,reqno]{amsart}

\usepackage{amsmath,amsfonts,amsthm,amssymb,color}
\usepackage{pdfsync}
\usepackage[latin1]{inputenc}
\usepackage{pstricks}
\usepackage{mathrsfs}
\usepackage{units}
\usepackage{graphicx}

  \topmargin -0.4in  \headsep 0.4in  \textheight 8.8in
   \oddsidemargin 0.00in  \evensidemargin 0.00in
\textwidth 6.3in

\newtheorem{prop}{Proposition}[section]
\newtheorem{cor}[prop]{Corollary}
\newtheorem{corollary}[prop]{Corollary}

\newtheorem{lemma}[prop]{Lemma}

\newtheorem{remark}[prop]{Remark}
\newtheorem{thm}[prop]{Theorem}

\newtheorem{defi}[prop]{Definition}
\newtheorem{definition}[prop]{Definition}

\newcommand{\cont}[1]{\displaystyle{\mathop{\frown}^{#1}}}

\renewcommand{\geq}{\geqslant}
\def\leq{\leqslant}

\newcommand{\N}{\mathbb{N}}

\newcommand{\R}{\mathbb{R}}

\newcommand{\C}{\mathbb{C}}

\newcommand{\pt}[1]{\stackrel{#1}{\frown}}
\newcommand{\teti}{\mathscr{T}}

\def\e{\varepsilon}

\def\1{{\mathbf{1}}}

\def\1{{\mathbf{1}}}
\def\0.5{{\frac{1}{2}}}


\newcommand{\lla}{\left\langle}
\newcommand{\rra}{\right\rangle}

\newcommand{\lp}{\left(}
\newcommand{\rp}{\right)}
\newcommand{\lc}{\left[}
\newcommand{\rc}{\right]}

\newcommand{\ca}{\mathcal{A}}
\newcommand{\path}{{\bf r}}

\newcommand{\fin}
{ \vspace{-0.6cm}
\begin{flushright}
\mbox{$\Box$}
\end{flushright}
\noindent }


\begin{document}


\begin{center}
{\large\textbf{
Convergence of Wigner integrals to the tetilla law
}}\\~\\
Aur\'elien Deya\footnote{Institut \'Elie Cartan, Universit\' e
Henri Poincar\' e, BP 70239, 54506 Vandoeuvre-l\`es-Nancy, France. Email: {\tt Aurelien.Deya@iecn.u-nancy.fr}}
and Ivan Nourdin\footnote{Institut \'Elie Cartan, Universit\' e
Henri Poincar\' e, BP 70239, 54506 Vandoeuvre-l\`es-Nancy, France. Email: {\tt inourdin@gmail.com}.
Supported in part by the two following (french) ANR grants: `Exploration des Chemins Rugueux'
[ANR-09-BLAN-0114] and `Malliavin, Stein and Stochastic Equations with Irregular Coefficients'
[ANR-10-BLAN-0121].}
\end{center}

\bigskip

{\small \noindent {\bf Abstract:} If $x$ and $y$ are two free semicircular random variables in
a non-commutative probability space $(\mathcal{A},E)$ and have variance one, we call the law of $\frac{1}{\sqrt{2}}(xy+yx)$ the {\it tetilla law}
(and we denote it by $\teti$),
because of the similarity between the form of its density and the shape of the tetilla cheese from Galicia.
In this paper,
we prove that a unit-variance sequence $\{F_n\}$ of multiple
Wigner integrals converges in distribution to $\mathscr{T}$
if and only if $E[F_n^4]\to E[\teti^4]$ and $E[F_n^6]\to E[\teti^6]$.
This result should be compared with limit theorems of the same flavor, recently obtained by Kemp, Nourdin, Peccati \& Speicher
\cite{knps} and Nourdin \& Peccati \cite{np-poisson}.
\bigskip

\noindent {\bf Keywords:} Contractions; Free Brownian motion; Free cumulants; Free probability; Non-central limit theorems;
Wigner chaos.

\bigskip

\noindent
{\bf 2000 Mathematics Subject Classification:} 46L54; 60H05; 60H07. }

\section{Introduction}

In a seminal paper of 2005, Nualart and Peccati discovered the following fact, called the {\it Fourth Moment Theorem} in the sequel:
for a sequence of normalized multiple
Wiener-It\^o integrals to converge to the standard Gaussian law, it is (necessary and) sufficient that its fourth moment tends
to 3. This somewhat suprising result has been the starting point of a new line of research, and has quickly led to several
applications, extensions and improvements in various areas and directions, including: Berry-Esseen type's inequalities \cite{np-ptrf}
with sometimes optimal bounds
\cite{np-aop,bbnp}, further developments in the multivariate case \cite{nrp,amv},
second order Poincaré inequalities \cite{npr-jfa}, new expression for
the density of functionals of Gaussian field \cite{nv-ejp}, or universality results for homogeneous sums \cite{npr-aop},
to cite but a few.
We refer the reader to the forthcoming monograph \cite{NPbook} for an overview of the most important developments;
see also \cite{WWW} for a constantly  updated web resource, with links to all available papers on the subject.

In this paper, we are more specifically concerned with the possible extensions of the Fourth Moment Theorem in the context of {\it free probability}.
This theory, popularized by Voiculescu \cite{voiculescu} in the early 1990's, admits the so-called {\it free Brownian motion}
as a central object. Free Brownian motion may be seen as
an infinite-dimensional symmetric matrix-valued Brownian motion and,
exactly as classical Brownian motion allows to express limits arising from random walks (Donsker's theorem),
the former enables to describe many limits involving traces of random matrices whose size tends to infinity.
It is actually defined in a very similar fashion to its classical counterpart, the only notable difference being that
its increments are {\it freely} independent and are distributed according to the Wigner's {\it semicircular law}
(see Definition \ref{defi:freeBM} for the details).

By mimicking the classical construction of {\it multiple Wiener-It\^o integrals} (see e.g. the book \cite{Nualart-book}
by Nualart, which is the classical reference on this subject), one can
now define the so-called {\it Wigner multiple integral}, as was done by Biane and Speicher in \cite{bianespeicher} and whose
construction is
recalled in Section \ref{s:wigner}.
(The terminology `Wigner integral' was invented in a humorous nod to the fact that Wigner's semicircular law plays the central role
here, and the similarity between the names Wigner and Wiener.)
As such,
this gives a precise meaning to the following object, called the $q$th {\it Wigner multiple integral with kernel} $f$:
\begin{equation}\label{iqf}
I_q^{(S)}(f)=\int_{\R_+^q} f(t_1,\ldots,t_q)dS_{t_1}\ldots dS_{t_q},
\end{equation}
when $q\geq 1$ is an integer, $f\in L^2(\R_+^q)$ is a deterministic function,
and $S=(S_t)_{t\geq 0}$ is a free Brownian motion.

If one considers a classical Brownian motion $B=(B_t)_{t\geq 0}$ instead of $S$ in (\ref{iqf}),
one gets the Wiener-It\^o multiple integral $I_q^{(B)}(f)$
with kernel $f$. In this case, it is well-known that we can restrict ourselves to {\it symmetric} kernels $f$
(that is, satisfying $f(t_1,\ldots,t_q)=f(t_{\sigma(1)},\ldots,t_{\sigma(q)})$ for almost all $t_1,\ldots,t_q\in\R_+$ and
all permutation $\sigma\in\mathfrak{S}_q$) without loss of generality, and that
the following
multiplication formula is in order: if $f\in L^2(\R_+^p)$ and $g\in L^2(\R_+^q)$ are both symmetric, then
\[
I_p^{(B)}(f)I_q^{(B)}(g)=\sum_{r=0}^{p\wedge q}
r!\binom{p}{r}\binom{q}{r} I_{p+q-2r}^{(B)}(f\cont{r} g),
\]
where $f\cont{r} g\in L^2(\R_+^{p+q-2r})$ is the $r$th {\it contraction} of $f$ and $g$, given by
\begin{eqnarray}\label{contr}
&&f\cont{r} g (t_1,\ldots,t_{p+q-2r}) \\
&=& \int_{\R_+^{p+q-2r}} f(t_1,\ldots,t_{p-r},x_1,\ldots,x_r)g(x_r,\ldots,x_1,t_{p-r+1},\ldots,t_{p+q-2r})
dx_1\ldots dx_r,\notag
\end{eqnarray}
$t_1,\ldots,t_{p+q-2r}\in\R_+.$

In the definition (\ref{contr}), there is of course no incidence to permute the variables inside
$f$ or inside $g$, thanks to the symmetry assumption.
In contrast, one must warn the reader
that the same may have dramatic consequences in the free context: indeed, because the increments of $S$
do not commute, permuting the variables inside $f$ generally changes the value of
$I_p^{(S)}(f)$. A bit surprisingly however, it turns out that the multiplication of two multiple Wigner integrals
takes a simpler form compared to the classical case.
Precisely, if $f\in L^2(\R_+^p)$ and $g\in L^2(\R_+^q)$, then
\[
I_p^{(S)}(f)I_q^{(S)}(g)=\sum_{r=0}^{p\wedge q} I_{p+q-2r}^{(S)}(f\cont{r} g).
\]
To understand more deeply the similarities and differences between the multiplication formulae in the free and in the classical settings,
we refer the reader to the paper \cite{donati} by Donati-Martin, where such a product formula is more generally derived
for the $q$-Brownian motion, which is nothing but an interpolation between the classical Brownian motion ($q=1$) and the free
Brownian motion ($q=0$).

Let us now go back to the heart of this paper.
Very recently, Kemp, Nourdin, Peccati and Speicher \cite{knps} extended the Fourth Moment Theorem
to the free setting:
this time, for a sequence of normalized Wigner multiple integrals to converge to the semicircular law, it is
(necessary and) sufficient that the fourth moment tends to 2, which is of course the value
of the fourth moment of the semicircular law.
Shortly afterwards,
Nourdin and Peccati considered in \cite{np-poisson} the problem of determining,
still in term of a finite number of moments, the convergence of a sequence of multiple Wigner integrals to
the {\it free Poisson} distribution (also called the Marchenko-Pastur distribution).
In this case, what happens to be necessary and sufficient is not only the convergence of the fourth moment, but also the convergence
of the third moment as well.
(Actually, only the convergence of a linear combination of these two moments turns out to be needed.)

In the present paper, our goal is to go one step further with respect to the two previously quoted papers \cite{knps,np-poisson},
by studying yet another distribution for which a similar
phenomenom occurs. More precisely, as a target limit we consider the random variable $\frac{1}{\sqrt{2}}(xy+yx)$, where
$x$ and $y$ are two free, centered, semicircular random variables with unit variance. We decided to call its law the {\it tetilla law},
because of the troubling similarity between the shape of its density and the form of the tetilla cheese from
Galicia, see the forthcoming Section \ref{s:tetilla}
for further details together with some pictures.
After our paper was submitted, it was brought to our notice that the tetilla law already appeared in the
reference \cite{duke} (see Example 1.5(2) therein) by Nica and Speicher, under the more conventional name
``symmetric Poisson''.

Here is now the precise question we aim to solve in the present paper:
is it possible, by means of a finite number of their moments only, to characterize the convergence
to the tetilla law of a given unit-variance sequence of multiple Wigner integrals? If so,
how many moments are then needed? (and what are they?)

As we will see, the answer to our first question is positive; furthermore,
it turns out that, unlike
the known related papers \cite{knps,np-poisson} in the literature, the novelty is that we must here have the convergence
of both
the fourth and the {\it sixth} moments to get the desired conclusion.
More specifically, we shall prove the following result in the present paper.
\begin{thm}\label{mainthm}
Let $\teti$ be a random variable distributed according to the tetilla law, and fix an integer $q\geq 2$.
Let $F_n=I_q(f_n)$ be a sequence of Wigner multiple integrals, where each $f_n$ is an 
element 
of $L^2(\R_+^q)$ such that $\|f_n\|_{L^2(\R_+^q)}=1$ and $f_n(t_1,\ldots,t_q)=f_n(t_q,\ldots,t_1)$ for
almost all $t_1,\ldots,t_q\in\R_+$. Then, the following three assertions are equivalent
as $n\to\infty$:
\begin{enumerate}
\item[(i)] $E[F_n^6]\to E[\teti^6]$ and $E[F_n^4]\to E[\teti^4]$;
\item[(ii)] For all $r,r'=1,\ldots,q-1$ such that $r'+2r\leq 2q$ and $r+r'\neq q$, it holds true that
\begin{equation}\label{contract-1}
(f_n\cont{r} f_n)\cont{r'}f_n\to 0
\end{equation}
and
\begin{equation}\label{contract-2}
-\frac12 f_n + \sum_{r=1}^{q-1} (f_n\cont{r}f_n)\cont{q-r}f_n\to 0;
\end{equation}
\item[(iii)] $F_n\to \teti$ in distribution.
\end{enumerate}
\end{thm}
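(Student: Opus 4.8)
The strategy is to establish the cycle $(iii)\Rightarrow(i)\Rightarrow(ii)\Rightarrow(iii)$, using throughout the multiplication formula together with the Wigner isometry $E[I_p(f)I_q(g)]=\delta_{p,q}\,\langle f,g^*\rangle_{L^2(\R_+^q)}$, where $g^*(t_1,\ldots,t_q)=\overline{g(t_q,\ldots,t_1)}$ denotes the adjoint kernel. The mirror-symmetry hypothesis $f_n=f_n^*$ will be invoked repeatedly: it forces $E[F_n^2]=\|f_n\|_{L^2}^2=1$ (the only surviving term in $I_q(f_n)^2=\sum_{r=0}^q I_{2q-2r}(f_n\cont{r}f_n)$ having zero expectation is the scalar $r=q$ term $f_n\cont{q}f_n=\langle f_n,f_n^*\rangle$), and it makes the relevant contraction inner products real and sign-definite.

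The implication $(iii)\Rightarrow(i)$ is the soft direction. Since each $F_n$ belongs to the fixed $q$th Wigner chaos and has unit variance, the hypercontractivity of that chaos bounds every moment $E[F_n^m]$ by a constant depending only on $m$ and $q$. Hence convergence in distribution automatically upgrades to convergence of all moments; in particular the fourth and sixth ones converge to those of $\teti$.

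The equivalence $(i)\Leftrightarrow(ii)$ is the analytic core. First I would compute the reference values $E[\teti^4]$ and $E[\teti^6]$ through the moment--cumulant formalism, the free cumulants of $\teti$ being those of the symmetric free Poisson. Then, expanding $F_n^4$ by applying the product formula to $I_q(f_n)^2\cdot I_q(f_n)^2$ and $F_n^6$ by a further application, and collecting terms via the isometry, I would obtain exact expressions
\[
E[F_n^4]=E[\teti^4]+\sum_{r}\alpha_r\,\langle f_n\cont{r}f_n,(f_n\cont{r}f_n)^*\rangle,\qquad
E[F_n^6]=E[\teti^6]+(\text{inner products of }(f_n\cont{r}f_n)\cont{r'}f_n).
\]
The decisive step is to reorganise a suitable linear combination of the two moment differences into a sum of \emph{squared} $L^2$-norms with strictly positive coefficients, these norms being exactly $\|(f_n\cont{r}f_n)\cont{r'}f_n\|^2$ for the indices with $r+r'\neq q$ in \eqref{contract-1}, and $\bigl\|-\tfrac12 f_n+\sum_{r=1}^{q-1}(f_n\cont{r}f_n)\cont{q-r}f_n\bigr\|^2$ for \eqref{contract-2}. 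Once this manifestly nonnegative decomposition is in place, $(ii)\Rightarrow(i)$ is immediate, and conversely convergence of the moments forces the nonnegative sum to vanish, hence each contraction to vanish, giving $(i)\Rightarrow(ii)$.

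Finally, $(ii)\Rightarrow(iii)$ is proved by the method of moments, legitimate because $\teti$ is bounded and thus determined by its moments. Iterating the product formula to expand $E[F_n^m]$ yields a sum indexed by the ways of fully contracting the $m$ copies of $f_n$; condition \eqref{contract-1} annihilates all ``crossing'' or nested contributions, while \eqref{contract-2} pins the surviving terms to the exact recursion satisfied by the tetilla moments, the $-\tfrac12 f_n$ shift encoding the nonzero mean of $F_n^2-1$. Matching the two recursions concludes. I expect the main obstacle to lie in the core equivalence: arranging the fourth- and sixth-moment differences into a combination that is \emph{both} nonnegative \emph{and} expressed through precisely the contractions of (ii), with no term missing and none spurious. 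This is also where the sixth moment becomes indispensable, since the iterated contractions $(f_n\cont{r}f_n)\cont{r'}f_n$ are genuinely sixth-order objects that do not surface at fourth order, and it is where the cancellations produced by mirror symmetry, together with careful Cauchy--Schwarz bookkeeping, must be controlled exactly.
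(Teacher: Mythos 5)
Your architecture coincides with the paper's: $(iii)\Rightarrow(i)$ by uniform moment bounds on a fixed Wigner chaos, $(i)\Rightarrow(ii)$ by exhibiting a nonnegative quadratic decomposition of a moment combination, and $(ii)\Rightarrow(iii)$ by the method of moments, matching the asymptotic recursion satisfied by $E[F_n^l]$ with the recursion governing the tetilla moments. However, the step you yourself flag as ``the main obstacle'' --- producing a combination of the fourth- and sixth-moment differences that is simultaneously nonnegative and expressed through precisely the contractions appearing in (ii) --- is where essentially all of the work lies, and your sketch supplies neither of the two ingredients that make it possible.

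First, the correct functional is not a generic linear combination of $E[F_n^6]-E[\teti^6]$ and $E[F_n^4]-E[\teti^4]$ but the specific quantity
\[
E\Big[\big(F_n^3-\tfrac{5}{2}F_n\big)^2\Big]-1-2\sum_{r=1}^{q-1}\big\|f_n\stackrel{r}{\frown}f_n\big\|^2_{2q-2r},
\]
which hypothesis (i), combined with the exact identity $E[F_n^4]=2+\sum_{r=1}^{q-1}\|f_n\stackrel{r}{\frown}f_n\|^2_{2q-2r}$, forces to tend to $0$. Second, and more seriously, expanding $F_n^3-\frac{5}{2}F_n$ over the Wigner chaoses yields at each level $2k$ the squared norm of the \emph{sum} $\sum_{r}(f_n\stackrel{r}{\frown}f_n)\stackrel{\frac{3q}{2}-k-r}{\frown}f_n$, not a sum of individual squared norms; the cross inner products $\langle (f_n\stackrel{r}{\frown}f_n)\stackrel{s}{\frown}f_n,(f_n\stackrel{r'}{\frown}f_n)\stackrel{s'}{\frown}f_n\rangle$ with $r\neq r'$ carry no a priori sign, so the manifest nonnegativity you invoke is not automatic. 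The paper's Lemma \ref{lm1}, built on the Fubini-type identities of Proposition \ref{contr-link}, shows that the total contribution of these cross terms over the levels $k>q/2$ equals twice a sum of squared norms at the levels $k<q/2$; only after this rearrangement does the expression become a sum of nonnegative terms, from which one first extracts the vanishing of the sums over $r$, and then, via Proposition \ref{contr-link}(3) and Cauchy--Schwarz, the vanishing of each individual $(f_n\stackrel{r}{\frown}f_n)\stackrel{r'}{\frown}f_n$. Without this identity, your claimed decomposition into squared norms ``with strictly positive coefficients, these norms being exactly $\|(f_n\stackrel{r}{\frown}f_n)\stackrel{r'}{\frown}f_n\|^2$'' does not hold as stated, and $(i)\Rightarrow(ii)$ remains unproven. (A minor further slip: in $(ii)\Rightarrow(iii)$, the term $-\frac12 f_n$ does not encode the mean of $F_n^2-1$, which is centered; it is the asymptotic analogue of the kernel identity $(f\stackrel{1}{\frown}f)\stackrel{1}{\frown}f=\frac12 f$ satisfied by the reference kernel $f=\frac{1}{\sqrt 2}(1_{[0,1]}\otimes 1_{[1,2]}+1_{[1,2]}\otimes 1_{[0,1]})$.)
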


At first glance, one could legitimately think that, in order to show our Theorem \ref{mainthm}, the only
thing to do is somehow to merely
extend the existing techniques introduced in \cite{knps,yetanother,np-poisson}. This is actually not the case.
Although the overall philosophy of the proof remains similar (more precisely, we shall follow the strategy introduced in \cite{yetanother}), here we
need to rely on new identities about iterated contractions
(such as the string of contractions appearing in (ii)) to be able to conclude, and we consider that the discovery of these
crucial identities represents one of the main achievement
of our study.

To finish this introduction, we offer the following result as a corollary of Theorem \ref{mainthm}. We believe that
it has its own interest
because, for the time being, very few is known about the laws which are admissible for a multiple Wigner integrals
with a given order.

\begin{cor}\label{notetilla}
Let $q\geq 3$ be an integer, and let $f$ be an
element of $L^2(\R_+^q)$ 
such that $\|f\|_{L^2(\R_+^q)}=1$
and $f(t_1,\ldots,t_q)=f(t_q,\ldots,t_1)$ for
almost all $t_1,\ldots,t_q\in\R_+$.
Then, $I_q(f)$ cannot be distributed according to the tetilla law.
\end{cor}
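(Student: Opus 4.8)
The plan is to prove Corollary \ref{notetilla} by contradiction, exploiting the moment conditions that characterize the tetilla law together with the equivalence established in Theorem \ref{mainthm}. Suppose that $I_q(f)$ were distributed according to the tetilla law for some $q\geq 3$ and some symmetric kernel $f$ with $\|f\|_{L^2(\R_+^q)}=1$. Consider the constant sequence $f_n=f$. Since $I_q(f)$ has the tetilla distribution, it shares all moments with $\teti$; in particular $E[F_n^4]=E[\teti^4]$ and $E[F_n^6]=E[\teti^6]$ trivially for every $n$, so condition (i) of Theorem \ref{mainthm} holds (the sequences are constant and hence convergent to the right limits). By the implication (i)$\Rightarrow$(ii) of the theorem, the contraction relations (\ref{contract-1}) and (\ref{contract-2}) must hold for this constant sequence. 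But for a constant sequence, convergence to $0$ forces the limit to \emph{be} $0$ exactly, which pins down rigid algebraic identities on $f$ itself.

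The heart of the argument is then to show that these identities are incompatible when $q\geq 3$. First I would extract from (\ref{contract-1}) that, for every admissible pair $(r,r')$ with $r,r'\in\{1,\ldots,q-1\}$, $r'+2r\leq 2q$ and $r+r'\neq q$, one has the \emph{exact} identity $(f\cont{r} f)\cont{r'}f=0$ in $L^2$. Second, (\ref{contract-2}) collapses to the exact identity
\begin{equation}\label{exactidentity}
\frac12 f=\sum_{r=1}^{q-1}(f\cont{r}f)\cont{q-r}f.
\end{equation}
The strategy is to use a carefully chosen admissible pair in the vanishing family (\ref{contract-1}) to force each summand on the right-hand side of (\ref{exactidentity}) to vanish as well, thereby yielding $\frac12 f=0$, which contradicts $\|f\|_{L^2(\R_+^q)}=1$. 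The key observation is that the summands $(f\cont{r}f)\cont{q-r}f$ in (\ref{exactidentity}) correspond precisely to the \emph{excluded} case $r+r'=q$ of (\ref{contract-1}), so one cannot naively reuse the vanishing relations; instead one must relate these critical contractions back to the admissible ones through the contraction identities that underlie the proof of Theorem \ref{mainthm}.

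The main obstacle, and the place where the hypothesis $q\geq 3$ must be used, lies exactly in this last step: showing that the critical terms $(f\cont{r}f)\cont{q-r}f$ are themselves controlled by (hence forced to vanish through) the admissible contractions. The case $q=2$ is genuinely excluded because there the constraint $r+r'\neq q$ leaves no nontrivial admissible pairs, so the argument cannot get started — consistent with the fact that the order-$2$ tetilla representation $\frac{1}{\sqrt2}(xy+yx)$ genuinely exists. For $q\geq 3$ there is enough room among the admissible pairs $(r,r')$, and I expect the decisive calculation to be an algebraic manipulation expressing each $(f\cont{r}f)\cont{q-r}f$ (or its norm, via an inner product with $f$) in terms of iterated contractions that are pinned to zero by (\ref{contract-1}), after possibly symmetrizing and exploiting the mirror symmetry $f(t_1,\ldots,t_q)=f(t_q,\ldots,t_1)$. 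Once this reduction is achieved, (\ref{exactidentity}) forces $f=0$ and the contradiction is immediate.
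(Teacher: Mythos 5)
Your setup is the same as the paper's: assume $I_q(f)\sim\teti$, apply Theorem \ref{mainthm} to the constant sequence $f_n=f$, and observe that for a constant sequence the convergences in (ii) become exact identities. Up to that point the argument is fine. The problem is that the entire substance of the proof — actually deriving a contradiction from those identities — is absent. You announce that you ``expect the decisive calculation'' to express each critical term $(f\cont{r}f)\cont{q-r}f$ in terms of contractions killed by (\ref{contract-1}), but you neither exhibit such an expression nor give any reason it should exist. The identities of the type used in the paper (Proposition \ref{contr-link}) only convert inner products of double contractions into other inner products of double contractions, and the diagonal family $r+r'=q$ is structurally separate from the off-diagonal one; for instance, point (3) of that proposition applied to $s=q-r$ just returns $\|(f\cont{r}f)\cont{q-r}f\|_q^2=\|(f\cont{q-r}f)\cont{r}f\|_q^2$, which gets you nowhere. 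So the route you sketch is not a proof, and there is no evidence it can be completed by ``algebraic manipulation'' alone.

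What the paper actually does is both simpler and of a different nature. It uses a \emph{single} consequence of (\ref{contract-1}), namely $(f\cont{1}f)\cont{1}f=0$ — the pair $(r,r')=(1,1)$ is admissible precisely because $q\geq 3$ forces $r+r'=2\neq q$, which is also the honest explanation of where the hypothesis $q\geq 3$ enters (your explanation, that $q=2$ leaves ``no nontrivial admissible pairs so the argument cannot get started,'' happens to be true but is attached to a nonexistent argument). From this one identity the paper concludes $f=0$ directly, never invoking (\ref{contract-2}): writing the vanishing of the triple contraction pointwise in the outer variables, projecting the $q-2$ ``middle'' arguments of $f$ onto an orthonormal basis $(e_{{\bf i}_{q-2}})$ to produce bivariate kernels $g_{{\bf i}_{q-2}}$ satisfying $(g\cont{1}g)\cont{1}g=0$, and then bootstrapping: $\|g\cont{1}g\|_2^2=\langle (g\cont{1}g)\cont{1}g,g\rangle_2=0$ gives $g\cont{1}g=0$, then $\|g\cont{1}e_j\|_1^2=\langle g\cont{1}g,e_j\otimes e_j\rangle_2=0$ gives $g=0$, hence every Fourier coefficient of $f$ vanishes, contradicting $\|f\|_q=1$. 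This positivity/duality mechanism (a cubic quantity controlling a quadratic one, which in turn controls $f$ itself) is the missing idea in your proposal; without it, or something equivalent, your argument does not close.
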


The rest of the paper is organised as follows. Section 2 contains some useful preliminaries and, among other things,
introduce the reader to the tetilla law. Then, the proof of Theorem \ref{mainthm} is done in Section 3.
Finally, Section 4 corresponds to an appendix, where the proofs of two technical results have been postponed.

\section{Preliminaries}

\subsection{The free probability setting} Our main reference for this section
is the monograph by Nica and Speicher \cite{nicaspeicher},
to which the reader is referred for any unexplained notion or result.

For the rest of the paper, we consider as given a so-called (tracial) {\it  $W^*$-probability space} $(\mathscr{A},E)$, where:
$\mathscr{A}$ is a von Neumann algebra of operators (with involution $x\mapsto x^*$),
and $E$ is a unital linear functional on $\mathscr{A}$ with the properties of being {\it weakly continuous}, {\it positive}
(that is, $E(xx^*)\geq 0$ for every $x\in \mathscr{A}$), {\it faithful} (that is, such that the relation $E(xx^*) = 0$ implies $x=0$),
and {\it tracial} (that is, $E(xy) = E(yx)$, for every $x,y\in \mathscr{A}$).

As usual in free probability, we refer to the self-adjoint elements of $\mathscr{A}$ as {\it random variables}. Given a random variable $x$
we write $\mu_x$ to indicate the {\it law} (or {\it distribution}) of $x$, which is defined as the unique Borel probability measure on
$\R$ such that $E(x^m) = \int_\R t^m d\mu_x(t)$ for every integer $m\geq 0$ (see e.g. \cite[Proposition 3.13]{nicaspeicher}).

We say that the unital subalgebras $\mathscr{A}_1,...,\mathscr{A}_n$ of $\mathscr{A}$ are {\it freely independent} whenever the
following property holds: let $x_1,...,x_m$ be a finite collection of elements chosen among the $\mathscr{A}_i$'s in such a way that
(for $j=1,...,m-1$) $x_j$ and $x_{j+1}$ do not come from the same $\mathscr{A}_i$ and $E(x_j) = 0$ for $j=1,...,m$; then
$E(x_1\ldots x_m)=0$. Random variables are said to be freely independent if they generate freely independent unital subalgebras of
$\mathscr{A}$.

\subsection{Free cumulants, $R$-transform and Cauchy transform}\label{s:cumulants}
Given an integer $m \geq 1$, we write $[m] = \{1,...,m\}$.
A {\it partition} of $[m]$ is a collection of non-empty and disjoint subsets of $[m]$, called {\it blocks},
such that their union is equal to $[m]$.

A partition $\pi$ of $[m]$ is said to be {\it non-crossing} if one cannot find integers $p_1,q_1,p_2,q_2$ such that:
(a) $1\leq p_1 < q_1 < p_2 < q_2\leq m$,
(b) $p_1,p_2$ are in the same block of $\pi$,
(c)  $q_1,q_2$ are in the same block of $\pi$, and
(d) the $p_i$'s are not in the same block of $\pi$ as the $q_i$'s.
The collection of the non-crossing partitions of $[m]$ is denoted by $NC(m)$, $m\geq 1$.

Given a random variable $x$, we denote by $\{\kappa_m(x) : m\geq 1\}$ the sequence of its {\it free cumulants}:
according to \cite[p. 175]{nicaspeicher}, they are defined through the recursive relation
\begin{equation}\label{e:momcum}
E(x^m) = \sum_{\pi = \{b_1,...,b_j\}\in NC(m)} \,\,\prod_{i=1}^j\kappa_{|b_i|}(x),
\end{equation}
where $|b_i|$ indicates the cardinality of the block $b_i$ of the non-crossing partition $\pi$.
The sequence  $\{\kappa_m(x) : m\geq 1\}$ completely determines the moments of
$x$ (and vice-versa), and the power series
\[
R_x(z)=\sum_{m=1}^\infty \kappa_m(x)z^m,
\]
is called the $R$-transform of (the distribution of) $x$. Its main properties is that it linearizes free convolution, just
as the classical cumulant transform linearizes classical convolution: that is, if $x$ and $y$ are free random variables,
then $R_{x+y}=R_x+R_y$ (as a formal series).

To recover the distribution $\mu_x$ from the free cumulants of the random variable $x$,
it is common to use its Cauchy transform $G_x$. It is defined by
\[
G_x(z)=\int_\R  \frac{d\mu_x(t)}{z-t},\quad z\in\mathbb{C}_+=\{z\in\mathbb{C}:\,{\rm Im}z>0\},
\]
and takes its values in $\mathbb{C}_-=\{z\in\mathbb{C}:\,{\rm Im}z<0\}$.
The Cauchy transform can be found from the $R$-transform as the inverse function of
$z\mapsto \frac1z\big(1+R_x(z)\big)$, that is, it verifies
\[
G_x\left[\frac1z\big(1+R_x(z)\big)\right]=z.
\]
On the other hand, Stieltjes inversion theorem states that
\begin{equation}\label{dmu}
d\mu_x(t) = -\frac1\pi\,\lim_{\e\to 0} \,\,{\rm Im}\big[G_x(t+i\e)\big]dt,\quad t\in\R,
\end{equation}
where the limit is to be understood in the weak topology on the space of probability measures on $\R$.

\subsection{Semicircular law}\label{S:semicircular}

The following family of distributions is fundamental in free probability. It plays the same role as the Gaussian
laws
in a classical probability space.

\begin{defi}\label{D:Z}
The centered {\it semicircular distribution} of parameter $t>0$, denoted by $S(0,t)$, is the probability distribution
which is absolutely continuous with respect to the Lebesgue measure, and whose density is given by
\[
p_t(u) = \frac{1}{2\pi t}\,\sqrt{4t-u^2}\,{\bf 1}_{(-2\sqrt{t},2\sqrt{t})}(u).
\]
\end{defi}
One can readily check that
$
\int_{-2\sqrt{t}}^{2\sqrt{t}} u^{2m} p_t(u)du = C_m t^m,
$
where $C_m$ is the $m$th Catalan number (so that e.g. the second moment of $S(0,t)$ is $t$).
One can deduce from the previous relation and
(\ref{e:momcum}) (e.g. by recursion) that the free cumulants of a random variable $x$ with law $S(0,t)$ are all
zero, except for $\kappa_2(x) =E[x^2]= t$ (equivalently, the $R$-transform of $x$ is given by $R_x(z)=tz^2$).
Note also that $S(0,t)$ is compactly supported, and therefore is uniquely determined by their moments
(by the Weierstrass theorem).

On the other hand, it is a classical fact (see e.g. \cite[Proposition 12.13]{nicaspeicher})
that the free cumulants of $x^2$, whenever $x\sim\ S(0,t)$, are given by
\begin{equation}\label{freepoisson}
\kappa_m(x^2)=t^m, \quad m\geq 1.
\end{equation}

\subsection{Free Brownian motion and Wigner chaoses}\label{s:wigner}
Our main reference for the content of this section is the paper by Biane and Speicher \cite{bianespeicher}.

\begin{defi}\label{d:spaces}{\rm
\begin{itemize}
\item[\rm (i)] For $1\leq p \leq \infty $, we write $L^p(\mathscr{A},E)$ to indicate the $L^p$ space obtained as the completion of
$\mathscr{A}$ with respect to the norm $\| a\|_p = E(|a|^p)^{1/p}$, where $ |a|= \sqrt{a^\ast a}$, and $\|\cdot\|_\infty$ stands for the operator norm.
\item[\rm (ii)] For every integer $q\geq 2$, the space $L^2(\mathbb{R}_+^q)$ is the collection of all real-valued
functions on $\mathbb{R}_+^q$ that are square-integrable with respect to the Lebesgue measure.
We use the short-hand notation $\langle\cdot,\cdot\rangle_q$ to indicate the inner product in $L^2(\mathbb{R}_+^q)$.
Also, given $f\in L^2(\mathbb{R}_+^q)$, we write
$f^*(t_1,t_2,...,t_q) = f(t_q,...,t_2,t_1)$,
and we call $f^*$ the {\it adjoint} of $f$. We say that an element of $L^2(\mathbb{R}_+^q)$ is {\it mirror symmetric} whenever
$f = f^*$ as a function.
\item[\rm (iii)] Given $f\in L^2(\mathbb{R}_+^q)$ and $g\in L^2(\mathbb{R}_+^p)$ for every $r = 1,...,\min(q,p)$,;
we define the $r$th {\it contraction} of $f$ and $g$ as the element of $L^2(\mathbb{R}_+^{p+q-2r})$ given by
(\ref{contr}).
One also writes $f\cont{0} g (t_1,...,t_{p+q}) = f\otimes g (t_1,...,t_{p+q}) = f(t_1,...,t_q)g(t_{q+1},...,t_{p+q})$. In the following, we shall use the notations  $f\cont{0} g$ and $f\otimes g$ interchangeably. Observe that, if $p=q$, then $f\cont{p} g = \langle f,g^{*}\rangle_{L^2(\R_+^q)}$.
\end{itemize}
}
\end{defi}

Let us now define what a free Brownian motion is.

\begin{defi}\label{defi:freeBM}
A {\it free Brownian motion} $S$ on $(\mathscr{A},E)$ consists of: (i) a filtration $\{\mathscr{A}_t : t\geq 0\}$ of von Neumann sub-algebras of $\mathscr{A}$ (in particular, $\mathscr{A}_u \subset \mathscr{A}_t$, for $0\leq u<t$),
(ii) a collection $S = (S_t)_{t\geq 0}$ of self-adjoint operators such that:
\begin{itemize}
\item[--] $S_t\in\mathscr{A}_t$  for every $t$;
\item[--] for every $t$, $S_t$ has a semicircular distribution $S(0,t)$;
\item[--] for every $0\leq u<t$, the increment $S_t - S_u$ is freely independent of $\mathscr{A}_u$, and has a semicircular distribution
$S(0,t-u)$.
\end{itemize}
\end{defi}

For every integer $q\geq 1$, the collection of all random variables of the type $I^{(S)}_q(f) = I_q(f)$,
$f \in L^2(\mathbb{R}_+^q)$, is called the $q$th {\it Wigner chaos} associated with $S$, and is defined according to
\cite[Section 5.3]{bianespeicher}, namely:
\begin{itemize}
\item[--] first define $I_q(f) = (S_{b_1} - S_{a_1})\ldots (S_{b_q} - S_{a_q})$, for every function $f$ having the form
\begin{equation}\label{e:simple}
f(t_1,...,t_q) = {\bf 1}_{(a_1,b_1)}(t_1)\times\ldots\times {\bf 1}_{(a_q,b_q)}(t_q),
\end{equation}
where the intervals $(a_i,b_i)$, $i=1,...,q$, are pairwise disjoint;
\item[--] extend linearly the definition of $I_q(f)$ to simple functions vanishing on diagonals, that is, to functions $f$ that are finite
linear combinations of indicators of the type (\ref{e:simple});
\item[--] exploit the isometric relation
\begin{equation}\label{e:freeisometry}
\langle I_q(f_1),I_q(f_2) \rangle_{L^2(\mathscr{A},E )}=
E\left[I_q(f_1)^*I_q(f_2)\right]=
E\left[I_q(f_1^*)I_q(f_2)\right]=
 \langle f_1,f_2 \rangle_{L^2(\mathbb{R}_+^q)},
\end{equation}
where $f_1,f_2$ are simple functions vanishing on diagonals, and use a density argument to define $I_q(f)$ for a general
$f\in  L^2(\mathbb{R}_+^q)$.
\end{itemize}

Observe that relation (\ref{e:freeisometry}) continues to hold for every pair $f_1,f_2 \in L^2(\mathbb{R}_+^q)$. Moreover, the above sketched
construction implies that $I_q(f)$ is self-adjoint if and only if $f$ is mirror symmetric. We recall the following fundamental
multiplication formula, proved in \cite{bianespeicher}. For every $f\in L^2(\R_+^p)$ and $g\in L^2(\R_+^q)$, where $p,q\geq 1$,
\begin{equation}\label{e:mult}
I_p(f)I_q(g) = \sum_{r=0}^{p\wedge q} I_{p+q-2r}(f\cont{r}g).
\end{equation}

By applying (\ref{e:mult}) iteratively and by taking into account that a $n$th Wigner integral ($n\geq 1$) is centered
by construction,
we immediately get the following formula, that relates explicitely the moments of a multiple
Wigner integral to its kernel.
\begin{corollary}\label{cor:for-mo}
For every function $f\in L^2(\R_+^q)$ and every integer $l\geq 2$, one has
\begin{equation}\label{form-gene-moments}
E\lc I_q(f)^l \rc =\sum_{(r_1,\ldots,r_{l-1}) \in \ca_{q,l}} ((\ldots(f\pt{r_1} f) \pt{r_2} f) \pt{r_3}) \ldots \pt{r_{l-1}} f,
\end{equation}
where $\ca_{q,l}$ stands for the set of elements $(r_1,\ldots,r_{l-1}) \in \{0,\ldots,q\}^{l-1}$ which satisfies the two conditions:
\[
2r_1+\ldots+2r_{k-1}+r_k \leq kp \ \ \ \text{for every} \ k\in \{2,\ldots,l-1\}, \quad \text{and} \quad
2r_1+\ldots+2r_{l-1}=lp.
\]
\end{corollary}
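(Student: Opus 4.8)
The plan is to obtain (\ref{form-gene-moments}) by iterating the multiplication formula (\ref{e:mult}) and then applying $E$, using the two facts recalled just above the statement: a Wigner integral of order $n\geq 1$ is centered, i.e. $E[I_n(h)]=0$, whereas $I_0$ is the identity on scalars, so that $E[I_0(c)]=c$. Since the contraction $h\cont{r}g$ of a function of order $p'$ with one of order $q$ is a function of order $p'+q-2r$, only the fully contracted, order-zero terms will survive the expectation. The difficulty is therefore purely combinatorial: to expand the product $I_q(f)^l=I_q(f)\cdots I_q(f)$ ($l$ factors) into a single sum of multiple Wigner integrals indexed by a string of contraction parameters, and to read off which strings yield an order-zero term.

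First I would set $P_1=I_q(f)$ and define recursively $P_{k+1}=P_k\,I_q(f)$ for $1\leq k\leq l-1$, so that $P_l=I_q(f)^l$. Applying (\ref{e:mult}) to $P_k\,I_q(f)$ one step at a time, and using that the right-hand factor always has order $q$, an immediate induction shows that
\[
P_l=\sum_{(r_1,\ldots,r_{l-1})} ((\cdots(f\cont{r_1} f)\cont{r_2} f)\cdots)\cont{r_{l-1}} f,
\]
where, writing $o_k=(k+1)q-2(r_1+\ldots+r_k)$ for the order of the term produced after the $k$-th multiplication (with $o_0=q$), the summation index $r_k$ is constrained by the range in (\ref{e:mult}), namely $0\leq r_k\leq o_{k-1}\wedge q$. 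The upper bound $r_k\leq q$ gives $r_k\in\{0,\ldots,q\}$, while $r_k\leq o_{k-1}=kq-2(r_1+\ldots+r_{k-1})$ rewrites exactly as $2(r_1+\ldots+r_{k-1})+r_k\leq kq$; for $k=1$ this reads $r_1\leq q$ and is vacuous, so the effective constraints are those for $k\in\{2,\ldots,l-1\}$, which is the first defining condition of $\ca_{q,l}$. Because the summation range in (\ref{e:mult}) is $r\in\{0,\ldots,p\wedge q\}$, every $o_k$ is automatically nonnegative, so no term of negative order is ever created and the bookkeeping stays consistent.

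Finally I would apply $E$ to both sides. The term indexed by $(r_1,\ldots,r_{l-1})$ is a Wigner integral of order $o_{l-1}=lq-2(r_1+\ldots+r_{l-1})$, whose expectation vanishes unless $o_{l-1}=0$, i.e. unless $2(r_1+\ldots+r_{l-1})=lq$, which is precisely the second defining condition of $\ca_{q,l}$. For such a string the corresponding iterated contraction is an element of $L^2(\R_+^0)=\R$, so that $E[I_0(\cdot)]$ returns it unchanged, and summing over the surviving strings yields exactly the right-hand side of (\ref{form-gene-moments}). I do not expect any genuine obstacle here beyond careful indexing; the only point deserving attention is to check that the left-nested contraction notation $((\cdots(f\cont{r_1}f)\cont{r_2}f)\cdots)$ faithfully records the left-to-right order in which the multiplication formula is applied, since contractions are not symmetric in the non-commutative setting.
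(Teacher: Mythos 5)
Your argument is correct and is exactly the paper's own (one-line) justification: iterate the multiplication formula (\ref{e:mult}), track the order $o_k=(k+1)q-2(r_1+\cdots+r_k)$ of each nested contraction to read off the admissibility constraints, and use that every Wigner integral of order $n\geq 1$ is centered so that only the fully contracted (order-zero) strings survive the expectation. Your careful bookkeeping also implicitly corrects the typo $p$ for $q$ in the displayed conditions defining $\ca_{q,l}$; nothing further is needed.
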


\subsection{Tetilla law}\label{s:tetilla}

We are now in a position to define the so-called tetilla law, which lies at the very heart of this paper.
See also \cite[Example 1.5(2)]{duke} for other properties.

\begin{definition}
Let $x,y$ be two free semicircular random variables with variance one. The law of the random variable
$\frac{1}{\sqrt{2}} \lp xy+yx \rp$ is denoted $\teti$, and is called the tetilla law.
\end{definition}

\begin{lemma}\label{lem:cum}
Let $x,y$ be two free semicircular random variables with variance one. Then the random variable
$w=\frac{1}{\sqrt{2}} \lp x^2-y^2 \rp$ is distributed according to the tetilla law. As a consequence, the free cumulants of the
tetilla law are given by \[
\kappa_m(w) =\left\{\begin{array}{cl}
2^{1-m/2}&\quad\mbox{if $m$ is even}\\
0&\quad\mbox{if $m$ is odd}
\end{array}
\right.
.
\]
Equivalently, the $R$-transform of $w$ is given by $R_w(z)=2z^2/(2-z^2)$.
\end{lemma}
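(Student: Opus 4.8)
The plan is to establish the distributional identity first, and then to read off the free cumulants from the $R$-transform, which is where the computation turns routine. The crucial ingredient is the \emph{orthogonal invariance} of a standard semicircular family. Concretely, I would set
\[
u=\frac{x+y}{\sqrt{2}},\qquad v=\frac{x-y}{\sqrt{2}},
\]
obtained from $(x,y)$ through the orthogonal matrix $O=\frac{1}{\sqrt{2}}\left(\begin{smallmatrix}1&1\\1&-1\end{smallmatrix}\right)$, and note the algebraic identity $uv+vu=x^2-y^2$, so that $w=\frac{1}{\sqrt{2}}(uv+vu)$. It therefore suffices to check that $u$ and $v$ are again free semicircular variables with unit variance, for then $w$ has the tetilla law $\teti$ by the very definition of the latter.

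To verify this, I would use the free-cumulant description of the pair $(x,y)$: freeness of two standard semicircular variables is equivalent to saying that $(x,y)$ is a semicircular family whose only non-vanishing cumulant is $\kappa_2(x,x)=\kappa_2(y,y)=1$, all mixed and all higher-order cumulants being zero. Since free cumulants are multilinear in their arguments, every cumulant of the family $(u,v)$ of order different from $2$ vanishes as well, while the second-order cumulants transform according to $OO^{\mathsf T}=I$. Hence $(u,v)$ is again a standard semicircular family with identity covariance; equivalently, $u$ and $v$ are free semicircular variables of variance one. This proves that $w$ is distributed according to $\teti$.

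It then remains to compute the $R$-transform of $w=\frac{1}{\sqrt{2}}x^2-\frac{1}{\sqrt{2}}y^2$. Since $x$ and $y$ are free, so are $x^2$ and $y^2$, hence also $\frac{1}{\sqrt{2}}x^2$ and $-\frac{1}{\sqrt{2}}y^2$; as the $R$-transform linearizes free convolution, $R_w=R_{x^2/\sqrt{2}}+R_{-y^2/\sqrt{2}}$. By (\ref{freepoisson}) with $t=1$, the free cumulants of $x^2$ are all equal to $1$, so $R_{x^2}(z)=\sum_{m\geq 1}z^m=\frac{z}{1-z}$; combining this with the scaling rule $\kappa_m(cx^2)=c^m\kappa_m(x^2)$, i.e.\ $R_{cx^2}(z)=R_{x^2}(cz)$, gives
\[
R_w(z)=\frac{z}{\sqrt{2}-z}-\frac{z}{\sqrt{2}+z}=\frac{2z^2}{2-z^2}.
\]
Expanding $R_w(z)=z^2\sum_{k\geq 0}(z^2/2)^k=\sum_{k\geq 0}2^{-k}z^{2k+2}$ and matching powers of $z$ then yields $\kappa_m(w)=2^{1-m/2}$ for even $m$ and $\kappa_m(w)=0$ for odd $m$, as announced.

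The only genuinely non-trivial step is the orthogonal invariance of the semicircular family; everything else follows directly from the free-cumulant formalism recalled in Section~\ref{s:cumulants}. In writing it up I would be careful to invoke the precise statement that the joint distribution of a semicircular family is determined by its covariance (a standard fact from \cite{nicaspeicher}), which is what legitimizes passing from the cumulant computation for $(u,v)$ back to a genuine distributional conclusion about $w$.
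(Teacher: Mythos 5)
Your proposal is correct and follows essentially the same route as the paper: the key step in both is the orthogonal invariance of the standard semicircular pair (the paper cites \cite[Corollary 8.20]{nicaspeicher} for the fact that jointly semicircular families are determined by their covariance, where you verify it via multilinearity of cumulants), which turns $\frac{1}{\sqrt{2}}(xy+yx)$ into $\frac{1}{\sqrt{2}}(x^2-y^2)$, after which the cumulants follow from freeness of $x^2$ and $y^2$, the scaling rule, and (\ref{freepoisson}). Your detour through the explicit $R$-transform $\frac{z}{\sqrt{2}-z}-\frac{z}{\sqrt{2}+z}$ is just a repackaging of the paper's direct cumulant computation $\kappa_m(w)=2^{-m/2}(1+(-1)^m)\kappa_m(x^2)$.
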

\begin{proof}
It is immediately checked that the two random vectors $(x,y)$ and
$\left(\frac{x+y}{\sqrt{2}},\frac{x-y}{\sqrt{2}}\right)$ share the same law.
(Indeed, they are both jointly semicircular with the same covariance matrix, see \cite[Corollary 8.20]{nicaspeicher}.)
Consequently, the two random variables
$xy+yx$ and $x^2-y^2$
have the same law as well, thus showing that $w$ is distributed according to the tetilla law.
Thanks to this new representation and the linearization property of the $R$-transform with respect to free
convolution, it is now easy to calculate the free cumulants of $\teti$. For any $m\geq 1$, we have
\begin{eqnarray*}
\kappa_m(w)&=&2^{-m/2}\kappa_m(x^2-y^2)=2^{-m/2}\big(\kappa_m(x^2)+(-1)^m\kappa_m(y^2)\big)\\
&=&2^{-m/2}(1+(-1)^m)\kappa_m(x^2),
\end{eqnarray*}
and the desired conclusion now follows from (\ref{freepoisson}).
\end{proof}

\begin{prop}\label{prop:densite}
The tetilla law $\teti$ admits a compactly supported density $h$ with respect to the Lebesgue measure, given by
\begin{equation}\label{densite}
h(t)=
\frac{1}{2\sqrt{3}\pi t} \left[
\sqrt[3]{1+36t^2+3 \sqrt{6t^2+132t^4-24t^6} \,}
-
\sqrt[3]{1+36t^2-3 \sqrt{6t^2+132t^4-24t^6} \,}
\right],
\end{equation}
for $t\in \left[ -\frac{\sqrt{11+5\sqrt{5}}}{2},\frac{\sqrt{11+5\sqrt{5}}}{2}\right]$, $h(t)=0$ otherwise.
\end{prop}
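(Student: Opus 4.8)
The plan is to compute the Cauchy transform $G_\teti$ of the tetilla law explicitly and then to recover its density through the Stieltjes inversion formula (\ref{dmu}). By Lemma \ref{lem:cum} the $R$-transform is $R_w(z)=2z^2/(2-z^2)$, and $G_\teti$ is the functional inverse of
\[
K(z):=\frac1z\big(1+R_w(z)\big)=\frac1z\cdot\frac{2+z^2}{2-z^2}=\frac{2+z^2}{z(2-z^2)}.
\]
Hence, for each $z$, the value $y=G_\teti(z)$ must satisfy $K(y)=z$, which after clearing denominators is the cubic equation
\[
z\,y^3+y^2-2z\,y+2=0.
\]

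Next I would fix $t\in\R$ and study the boundary value $G_\teti(t+i0^+)$, a root of the cubic with real coefficients $z=t$. Cardano's reduction to a depressed cubic $u^3+pu+q=0$ gives
\[
p=-2-\frac{1}{3t^2},\qquad q=\frac{8}{3t}+\frac{2}{27t^3},
\]
and a short computation yields the Cardano quantity
\[
\Delta=\frac{q^2}{4}+\frac{p^3}{27}=\frac{2}{27t^4}\big(1+22t^2-4t^4\big).
\]
Since the measure is absolutely continuous exactly where $G_\teti(t+i0^+)$ is non-real, and a real-coefficient cubic has non-real roots precisely when $\Delta>0$, the support is $\{t:1+22t^2-4t^4>0\}=\{t:t^2<\tfrac{11+5\sqrt5}{4}\}$, which is the interval in the statement. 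This simultaneously establishes the compactness of the support.

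On this interval, writing $C$ and $D$ for the two real cube roots $\sqrt[3]{-q/2\pm\sqrt\Delta}$ occurring in Cardano's formula, the two non-real roots are $-\tfrac12(C+D)\pm\tfrac{\sqrt3}{2}\,i\,(C-D)$, so the quantity to extract is $\pm\tfrac{\sqrt3}{2}(C-D)$, a difference of cube roots. Pulling the factor $\tfrac{1}{3t}$ out of each cube root and simplifying the radicands by means of the identity $6t^2+132t^4-24t^6=6t^2(1+22t^2-4t^4)$, this difference becomes
\[
\pm\frac{\sqrt3}{6t}\left[\sqrt[3]{1+36t^2+3\sqrt{6t^2+132t^4-24t^6}}-\sqrt[3]{1+36t^2-3\sqrt{6t^2+132t^4-24t^6}}\right].
\]
The inversion formula $h(t)=-\frac1\pi\,{\rm Im}\,G_\teti(t+i0^+)$ together with $\frac{\sqrt3}{6}=\frac{1}{2\sqrt3}$ then reproduces exactly the expression (\ref{densite}).

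The delicate point, and the main obstacle, is the correct choice of root: only one solution of the cubic is $G_\teti$, namely the branch that is analytic on $\mathbb{C}_+$, takes values in $\mathbb{C}_-$, and satisfies $G_\teti(z)\sim 1/z$ as $z\to\infty$. I would single out this branch by its behaviour at infinity, continue it to the real boundary, discard the real root on the support (which would give a vanishing density) and select, among the conjugate pair, the root with negative imaginary part so that $h\geq 0$. Some sign bookkeeping in $t$ is also required—for instance $\sqrt\Delta=\frac{1}{9|t|^3}\sqrt{6t^2+132t^4-24t^6}$ carries an absolute value—and the cleanest remedy is to carry out the computation for $t>0$ and then invoke the symmetry of the tetilla law (valid because $w$ and $-w$ are equidistributed) to cover $t<0$. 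Finally, I would check for consistency that the resulting density (\ref{densite}) is nonnegative and integrates to one over the stated interval.
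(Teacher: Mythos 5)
Your proposal is correct and follows essentially the same route as the paper: derive the cubic $zy^3+y^2-2zy+2=0$ from the $R$-transform, solve it by Cardano (your $p$, $q$ and $\Delta=\frac{2}{27t^4}(1+22t^2-4t^4)$ all check out, and the resulting support and density match (\ref{densite})), and finish with Stieltjes inversion. The only cosmetic difference is in the branch selection: the paper pins down $G_\teti=y_2$ by evaluating the three Cardano roots at $z=i$ and checking signs of imaginary parts, whereas you use the asymptotics $G(z)\sim 1/z$ and the sign of the boundary imaginary part — both are legitimate ways to resolve the same delicate point you correctly flagged.
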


It is formula (\ref{densite}) that motivated us to call the law of $\teti$ the tetilla law.
The reader should indeed have a look at the two following pictures, where the similarity between the graph of $h$ and the
shape of the tetilla cheese
seemed evident to us.

\bigskip

\includegraphics[width=15cm]{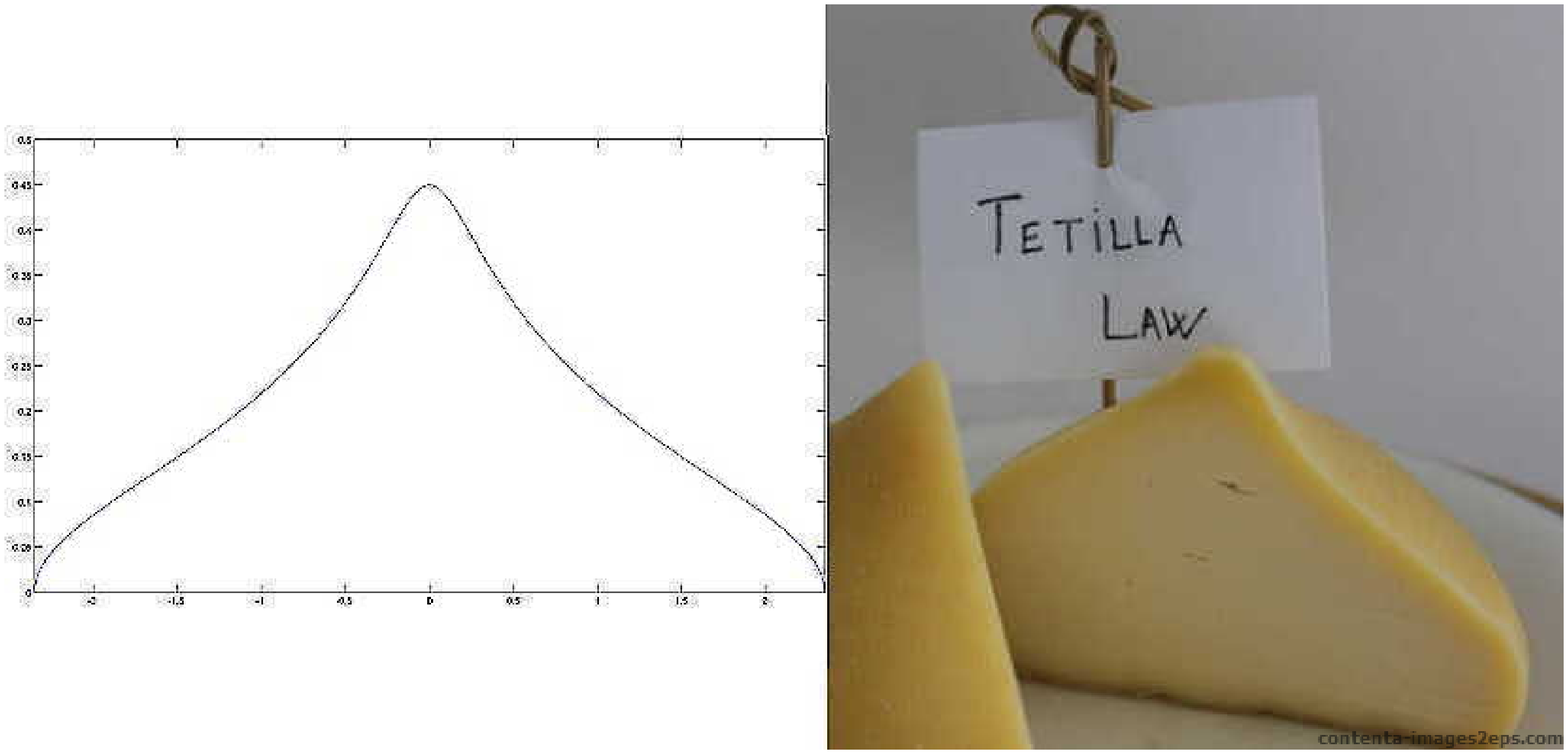}

\bigskip

\noindent
{\it Proof of Proposition \ref{prop:densite}}.
Let $w$ have the tetilla law.
According to Section \ref{s:cumulants} and using the expression of the $R$-transform given in Lemma \ref{lem:cum}, the
Cauchy transform $y=G_w(z)$ ($z\in\C_+$) of $w$ at point $z$ verifies
$z y^3+y^2-2z y+2=0$. The explicit solutions $y_1,y_2,y_3$ of the latter equation can be obtained thanks to Cardan's formulae:
for every $z\in \C_+$, one has
$$y_1(z)=-(u(z)+v(z))-\frac{1}{3z}, \,\, y_2(z)=-(j u(z)+\overline{j} v(z))-\frac{1}{3z}, \,\, y_3(z)=-(\overline{j} u(z)+j v(z))-\frac{1}{3z},$$
where $j=e^{2i\pi /3}$, and where we successively set
$$u(z)=\lp \frac{q(z)+\sqrt{\Delta(z)}}{2} \rp^{1/3}, \quad v(z)=\lp \frac{q(z)-\sqrt{\Delta(z)}}{2} \rp^{1/3},$$
$$q(z)=\frac{2}{27 z^3} (1+18 z^2), \quad \Delta(z)=q(z)^2+\frac{4}{27} p(z)^3, \quad p(z)=-(1+\frac{1}{3z^2}).$$
Now, in order to identify $G_w$ with one of these solutions, let us observe that both $\text{Im} \, y_1(i)$ and $\text{Im} \, y_3(i)$ are strictly positive reals (this is only straightforward computation).
Since $G_w$ is known to take its values in $\C_-$, we can assert that $G_w=y_2$ on $\C_+$. The density (\ref{densite}) is
then easily derived from the Stieltjes inversion formula (\ref{dmu}).

\fin

\subsection{Moments}

Once endowed with its free cumulants, we can go back to Formula (\ref{e:momcum}) in order to compute the moments of the tetilla law.

\begin{prop}
The moments of the tetilla law are given by the following formulae: for every $n\geq 1$,
\begin{equation}\label{mom-exp}
m_{2n+1}(\teti)=0 \quad  , \quad m_{2n}(\teti)=\frac{1}{2^n n} \sum_{k=1}^n 2^k {2n \choose{k-1}}{n\choose{k}}.
\end{equation}
\end{prop}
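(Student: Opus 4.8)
The plan is to read off the moments directly from the moment--cumulant relation (\ref{e:momcum}) applied to a random variable $w$ with the tetilla law, using the free cumulants computed in Lemma \ref{lem:cum}. Since $\kappa_m(w)=0$ for odd $m$, the only partitions $\pi\in NC(m)$ contributing to $E[w^m]$ are those all of whose blocks have even cardinality. In particular, when $m=2n+1$ is odd there is no such partition (a sum of even integers cannot be odd), so $m_{2n+1}(\teti)=0$. For $m=2n$, write a contributing partition $\pi$ as having $k$ blocks of sizes $2j_1,\dots,2j_k$ with $j_1+\cdots+j_k=n$; by Lemma \ref{lem:cum} its weight is $\prod_{i=1}^k\kappa_{2j_i}(w)=\prod_{i=1}^k 2^{1-j_i}=2^{k-n}$, which depends on $\pi$ only through its number of blocks $k$. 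Hence $m_{2n}(\teti)=\sum_{k=1}^n 2^{k-n}N_{n,k}$, where $N_{n,k}$ denotes the number of non-crossing partitions of $[2n]$ into exactly $k$ blocks, each of even size.

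Everything then reduces to the purely combinatorial identity $N_{n,k}=\frac1n\binom{2n}{k-1}\binom{n}{k}$, after which $m_{2n}(\teti)=2^{-n}\sum_{k=1}^n 2^k N_{n,k}$ yields (\ref{mom-exp}) at once. To compute $N_{n,k}$ I would invoke Kreweras' enumeration formula (available in \cite{nicaspeicher}): the number of non-crossing partitions of $[2n]$ having exactly $b_i$ blocks of size $i$ (so that $\sum_i i\,b_i=2n$ and $\sum_i b_i=k$) equals $\frac{(2n)!}{(2n-k+1)!\,\prod_i b_i!}$. Summing over all block types with $k$ even blocks and factoring out $\frac{(2n)!}{(2n-k+1)!}=(k-1)!\binom{2n}{k-1}$, one is left with $N_{n,k}=(k-1)!\binom{2n}{k-1}\,\Sigma_{n,k}$, where, setting $c_j=b_{2j}$, the quantity $\Sigma_{n,k}=\sum\frac{1}{\prod_j c_j!}$ ranges over all $(c_j)_{j\ge1}$ with $\sum_j c_j=k$ and $\sum_j j\,c_j=n$.

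The weighted count $\Sigma_{n,k}$ is then extracted by a generating function: since $\sum_{(c_j)}\frac{1}{\prod_j c_j!}\,x^{\sum_j j c_j}y^{\sum_j c_j}=\prod_{j\ge1}e^{y x^j}=\exp\!\big(yx/(1-x)\big)$, one gets $\Sigma_{n,k}=[x^n y^k]\exp(yx/(1-x))=\frac1{k!}[x^{n-k}](1-x)^{-k}=\frac1{k!}\binom{n-1}{k-1}$. Combining, $N_{n,k}=(k-1)!\binom{2n}{k-1}\cdot\frac1{k!}\binom{n-1}{k-1}=\frac1k\binom{2n}{k-1}\binom{n-1}{k-1}=\frac1n\binom{2n}{k-1}\binom{n}{k}$, using the elementary identity $\frac1k\binom{n-1}{k-1}=\frac1n\binom{n}{k}$. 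Plugging this into $m_{2n}(\teti)=2^{-n}\sum_{k=1}^n 2^k N_{n,k}$ gives exactly (\ref{mom-exp}).

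The only genuine obstacle is the enumeration of non-crossing partitions of $[2n]$ into $k$ even blocks, i.e. the identity for $N_{n,k}$; once the weight of a partition is seen to depend only on its number of blocks, the remainder is bookkeeping. I would rely on Kreweras' formula together with the generating-function computation above. As a self-contained alternative one can introduce the bivariate series $P(x,t)=\sum_{n,k}N_{n,k}x^n t^k$ and, by decomposing a partition according to the block containing the point $1$, derive the functional equation $P=1+t\,xP^2/(1-xP^2)$, from which Lagrange inversion recovers $N_{n,k}$; extracting the $t^k$-refined coefficients is, however, more cumbersome than the direct route via Kreweras. One could also bypass partitions entirely and apply Lagrange inversion to the moment equation $M(z)=C(zM(z))$ with $C(z)=1+R_w(z)=(2+z^2)/(2-z^2)$ coming from Lemma \ref{lem:cum}, but the combinatorial approach has the advantage of making the double sum in (\ref{mom-exp}) transparent.
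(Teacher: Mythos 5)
Your proof is correct, and its first half coincides exactly with the paper's: both arguments reduce (\ref{mom-exp}) via the moment--cumulant relation (\ref{e:momcum}) and Lemma \ref{lem:cum} to counting non-crossing partitions of $[2n]$ into $k$ blocks of even size, after observing that the weight $2^{k-n}$ of a contributing partition depends only on its number of blocks. (Your treatment of the odd moments, via the parity obstruction on block sizes, differs harmlessly from the paper's appeal to the symmetry of the density.) The divergence is in how the count $N_{n,k}$ is obtained: the paper simply cites Edelman's enumeration \cite[Lemma 4.1]{edelman} as a black box, whereas you derive $N_{n,k}=\frac1n\binom{2n}{k-1}\binom{n}{k}$ from Kreweras' formula for the number of non-crossing partitions with prescribed block type (available in \cite{nicaspeicher}), extracting the sum over block types by the generating function $\exp\bigl(yx/(1-x)\bigr)$. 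Your computation is correct throughout --- including the coefficient extraction $\Sigma_{n,k}=\frac{1}{k!}\binom{n-1}{k-1}$ and the identity $\frac1k\binom{n-1}{k-1}=\frac1n\binom{n}{k}$ --- and it buys a self-contained proof; as a side benefit it pins down the normalization $\frac1n$ in the count, which the paper's displayed expression for the cardinality omits (a slip there, since without it the stated formula (\ref{mom-exp}) would give $m_4=5$ instead of the correct $5/2$).
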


\begin{proof}
The fact that the odd moments are all equal to zero is a direct consequence of the symmetry of the density $h$, see (\ref{densite}). Recall next that $\kappa_{2k+1}(\teti)=0$ and $\kappa_{2k}(\teti)=2^{-k+1}$. As a consequence,
\begin{eqnarray*}
m_{2n}(\teti)&=& \sum_{k=1}^{2n} \sum_{(b_1,\ldots,b_k)\in NC(2n)} \prod_{i=1}^k \kappa_{|b_i|}(\teti)
=\sum_{k=1}^{n} \sum_{\substack{(b_1,\ldots,b_k)\in NC(2n)\\ |b_1|,\ldots,|b_k| \, \text{all even}}} \prod_{i=1}^k 2^{-\frac{|b_i|}{2}+1}\\
&=&\sum_{k=1}^{n} \sum_{\substack{(b_1,\ldots,b_k)\in NC(2n)\\ |b_1|,\ldots,|b_k| \, \text{all even}}} 2^k 2^{-\frac{|b_1|+\ldots+|b_k|}{2}}\\
&=&\frac{1}{2^n} \sum_{k=1}^n 2^k \, \text{Card}\{(b_1,\ldots,b_k) \in NC(2n): \, |b_1|,\ldots,|b_k| \, \text{all even}\}.
\end{eqnarray*}
It turns out that the latter cardinal has been explicity computed in \cite[Lemma 4.1]{edelman}:
$$\text{Card}\{(b_1,\ldots,b_k) \in NC(2n): \, |b_1|,\ldots,|b_k| \, \text{all even}\}={2n \choose{k-1}}{n\choose{k}},$$
which gives the result.
\end{proof}

\subsection{An induction formula for the moments of the tetilla law}\label{subsec:moments-tetilla}
Now, with the help of Formula (\ref{form-gene-moments}), we are going to exhibit a specific algorithm that governs the sequence of the moments $(m_k(\teti))$. The purpose here does not lie in getting a way to compute these moments explicitly (one can use Formula (\ref{mom-exp}) to do so). In fact, the algorithm will be used as a guideline through the proof of our Theorem \ref{mainthm}.


\medskip

\noindent
\textbf{Notation}. For integers $p,q,l \geq 2$, let $\ca_{q,p,l}$ be the set of elements
$\path_{l-1}=\{r_1,\ldots,r_{l-1}\} \in \{0,\ldots,q\}^{l-1}$ which satisfy the following two conditions:

\smallskip

$(a)$ $2r_1+\ldots+2r_{k-1}+r_k \leq (k-1)q+p$ for any $k\in \{1,\ldots,l-1\}$;

\smallskip

$(b)$ $2r_1+\ldots+2r_{l-1}=(l-1)q+p$.

\noindent
Otherwise stated, $\ca_{q,p,l}$ stands for the set of elements
$\path_{l-1}=\{r_1,\ldots,r_{l-1}\} \in \{0,\ldots,q\}^{l-1}$ for which the $(l-1)$-th iterated contraction
starting from some $g\in L^2(\R_+^p)$ and continuing with some $f\in L^2(\R_+^q)$, that is,
\begin{equation}\label{nota-contr}
\mathcal{C}_{f,\path_{l-1}}(g):=(\ldots((g \pt{r_1} f) \pt{r_2} f) \pt{r_3} \ldots) \pt{r_{l-1}} f,
\end{equation}
is well-defined (condition $(a)$) and reduces to a real number (condition $(b)$). In particular, with the notation of Corollary \ref{cor:for-mo}, one has $\ca_{q,l}=\ca_{q,q,l}$.

\smallskip

With every $\path_{l-1} \in \ca_{q,p,l}$, one associates a walk $M$ on $\{0,\ldots,l-1\}$ as follows:
$$M_0=p, \quad M_k=kq+p-2r_1-\ldots-2r_{k}, \quad k=1,\ldots,l-1.$$
When dealing with functions $f\in L^2(\R_+^q)$ and $g\in L^2(\R_+^p)$ as before, $M_k$ corresponds to the number of arguments of
the function $(\ldots((g\pt{r_1} f) \pt{r_2} f) \pt{r_3}) \ldots \pt{r_k} f$. Then, the above conditions $(a)$ and $(b)$ can be
translated into the following constraints on the walk $M$:

\smallskip

$(i)$ $M_0=p$, $M_{l-1}=0$ and $M_k \geq 0$ for all $k=0,\ldots,l-1$;

\smallskip

$(ii)$ $M_{k+1}-M_k \in \{-q,-q+2,\ldots, q-2,q\}$;

\smallskip

$(iii)$ if $M_k \leq q$, then $M_{k+1} \geq q-M_k$.

\smallskip

Inversely, it is easily seen that, when a walk $M$ on $\{0,\ldots,l-1\}$ satisfying $(i)-(iii)$ is given,
one can recover a (unique) element $\path_{l-1} \in \ca_{q,p,l}$ by setting
\begin{equation}\label{bijection}
r_k=\frac{1}{2} \lp q-M_k+M_{k-1}\rp , \quad k=1,\ldots,l-1.
\end{equation}
This bijection gives us a handy graphical representation for the elements of $\ca_{q,p,l}$ (see Figure 1).
Also, by a slight abuse of notation, $\ca_{q,p,l}$ will also refer in the sequel to the set of walks on $\{0,\ldots , l-1\}$
subject to the constraints $(i)-(iii)$.

\begin{center}
\begin{figure}[!ht]\label{pic:walk}

\begin{pspicture}(0,0)(15,10)


\psline(0,-0.5)(0,10)

\psline(-0.5,0)(15,0)

\psline(-0.1,1)(0.1,1) \psline(-0.1,2)(0.1,2) \psline(-0.1,3)(0.1,3) \psline(-0.1,4)(0.1,4) \psline(-0.1,5)(0.1,5) \psline(-0.1,6)(0.1,6) \psline(-0.1,7)(0.1,7) \psline(-0.1,8)(0.1,8) \psline(-0.1,9)(0.1,9) \psline(-0.1,10)(0.1,10)

\rput(-0.3,1){\tiny{$2$}} \rput(-0.3,2){\tiny{$4$}} \rput(-0.3,3){\tiny{$6$}} \rput(-0.3,4){\tiny{$8$}} \rput(-0.3,5){\tiny{$10$}}

\psline(0,-0.1)(0,0.1) \psline(1.5,-0.1)(1.5,0.1) \psline(3,-0.1)(3,0.1) \psline(4.5,-0.1)(4.5,0.1) \psline(6,-0.1)(6,0.1) \psline(7.5,-0.1)(7.5,0.1) \psline(9,-0.1)(9,0.1) \psline(10.5,-0.1)(10.5,0.1) \psline(12,-0.1)(12,0.1) \psline(13.5,-0.1)(13.5,0.1)

\psline(0,2)(1.5,1) \psline(1.5,1)(3,1) \psline(3,1)(4.5,3) \psline(4.5,3)(6,4) \psline(6,4)(7.5,2) \psline(7.5,2)(9,0) \psline(9,0)(10.5,2) \psline(10.5,2)(12,2) \psline(12,2)(13.5,0)

\psline[linestyle=dotted](12,2)(6,10) \psline[linestyle=dotted](6,10)(0,2) \psline[linestyle=dotted](0,2)(1.5,0)

\rput(-0.3,-0.3){\tiny{$0$}} \rput(1.5,-0.3){\tiny{$1$}} \rput(3,-0.3){\tiny{$2$}} \rput(4.5,-0.3){\tiny{$3$}} \rput(6,-0.3){\tiny{$4$}} \rput(7.5,-0.3){\tiny{$5$}} \rput(9,-0.3){\tiny{$6$}} \rput(10.5,-0.3){\tiny{$7$}} \rput(12,-0.3){\tiny{$8$}} \rput(13.5,-0.3){\tiny{$9$}}

\rput(0.75,1.75){\tiny{$3$}} \rput(2.25,1.2){\tiny{$2$}} \rput(3.75,2.3){\tiny{$0$}} \rput(5.25,3.8){\tiny{$1$}} \rput(6.75,3.3){\tiny{$4$}} \rput(8.25,1.3){\tiny{$4$}} \rput(9.75,1.3){\tiny{$0$}} \rput(11.25,2.2){\tiny{$2$}} \rput(12.75,1.3){\tiny{$4$}}

\end{pspicture}

\

\caption{A walk in $\ca_{4,4,10}$. The number associated to each $[M_k,M_{k+1}]$ corresponds to the contraction order $r_{k+1}$. Observe in particular that every walk $M$ in $\ca_{4,4,10}$ is contained in the area delimited by the dotted line. Besides, $M_8$ is forced to be $4$. More generally, $M_{l-2}$ is necessarily equal to $q$.}
\end{figure}
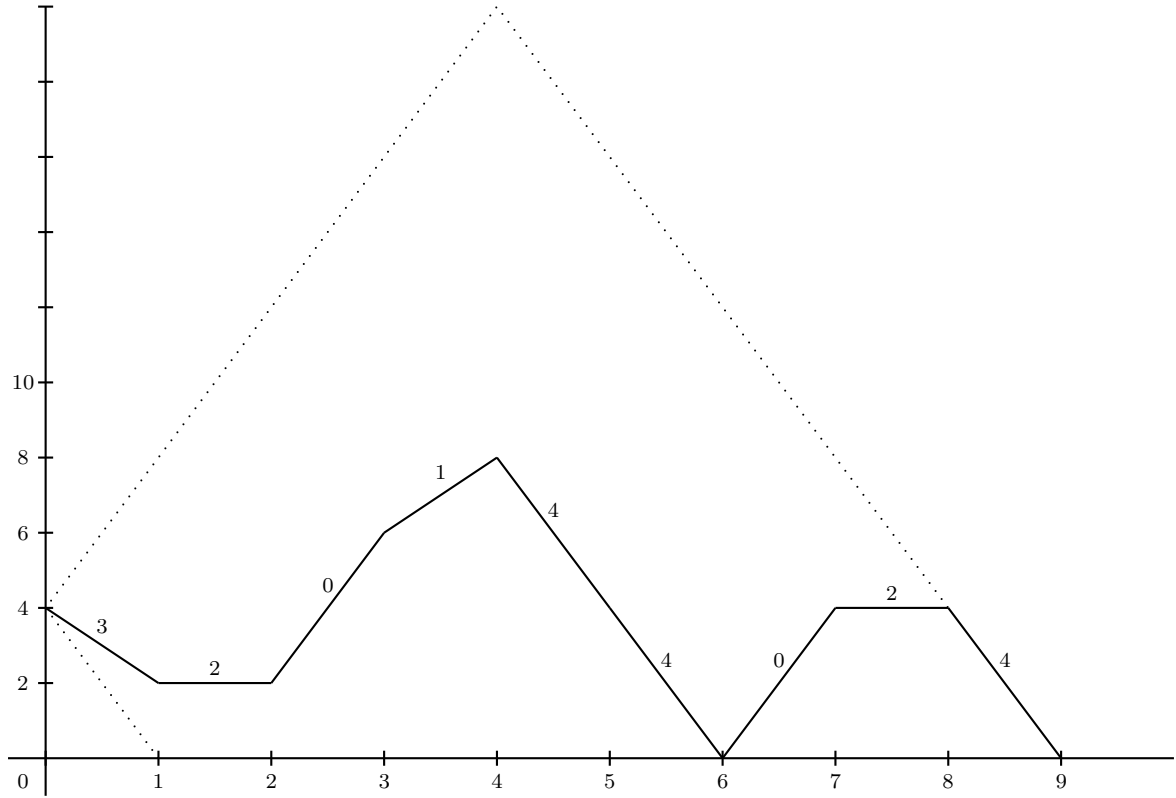
\end{center}

Keeping the above notation in mind, we go back to the consideration of the moments of the tetilla law.
In the rest of this section, we fix $f\in L^2(\R_+^2)$ as being equal to \[
f=\frac{1}{\sqrt{2}}\lp 1_{[0,1]}
\otimes 1_{[1,2]}+1_{[1,2]} \otimes 1_{[0,1]}\rp.
\]
Remember that the odd moments are all equal to zero. So, from now on, we fix an {\it even}
integer $l=2m\geq 4$.
According to Corollary \ref{cor:for-mo} and recalling the notation (\ref{nota-contr}), the $l$-th moment of $I_2(f)$ is given by
\[
S_{f,l}(f):=E\lc I_2(f)^l \rc =\sum_{\path_{l-1} \in \ca_{2,2,l}} \mathcal{C}_{f,\path_{l-1}}(f), \quad l\geq 2.
\]
More generally, we associate with every even integer $p\geq 2$ and every function $g\in L^2(\R_+^p)$ the quantity
$$S_{f,l}(g):=\sum_{\path_{l-1} \in \ca_{2,p,l}} \mathcal{C}_{f,\path_{l-1}}(g).$$
The set $\ca_{2,p,l}$ (with $p$ an even integer) is easy to visualize thanks to our walk representation. Indeed, the conditions $(i)$-$(iii)$ reduce here to: $(i)'$ $M_0=p$ and $M_{l-1}=0$; $(ii)'$
$M_{k+1}-M_k \in \{-2,0,2\}$; $(iii)'$ if $M_k=0$, then $M_{k+1}=2$, that is, the walk bounces back to $2$ each time it reaches $0$.

\smallskip

\noindent
With this representation in mind and because  $\| f\|_2=1$, it is readily checked that
\begin{eqnarray}
S_{f,2m}(f)&=& S_{f,2m-1}(1)+S_{f,2m-1}(f\pt{1} f)+S_{f,2m-1}(f \otimes f) \nonumber\\
&=&S_{f,2m-2}(f)+S_{f,2m-1}(f\pt{1} f)+S_{f,2m-1}(f \otimes f).\label{alg-1}
\end{eqnarray}
Now, observe the two relations $(f\pt{1} f) \pt{1} f =\frac{1}{2}f$ and $\lla f\pt{1} f,f \rra=0$, which give rise to the formula
\begin{equation}\label{alg-2}
S_{f,2m-1}(f \pt{1} f)=\frac{1}{2} S_{f,2m-2}(f)+S_{f,2m-2}((f\pt{1} f) \otimes f).
\end{equation}
As a result, it remains to compute $S_{f,2m-1}(f \otimes f)$ and $S_{f,2m-2} ((f\pt{1} f) \otimes f)$. To this end,
let us
introduce, for every integer $k\geq 2$, the subset $\ca_{2,p,k}^+\subset\ca_{2,p,k}$ of strictly positive walks
(up to the time $k-1$), and set
$$S^+_{f,k}(g):=\sum_{\path_{k-1} \in \ca^+_{2,p,k}} \mathcal{C}_{f,\path_{l-1}}(g).$$
In the latter formula, $\ca^+_{2,p,k}$ is of course understood as a subset of $\{0,1,2\}^{k-1}$ via the equivalence
given by (\ref{bijection}). Owing to the constraints $(i)'$-$(ii)'$, it is easily seen that, for every $\path_{2m-2} \in
\ca_{2,4,2m-1}$, there exists a smallest integer $k\in \{2,\ldots,2m-3\}$ such that the iterated contraction
$\mathcal{C}_{f,\path_{2m-2}} (f\otimes f)$ can be (uniquely) splitted into
$$\mathcal{C}_{f,\path_{2m-2}} (f\otimes f)=\mathcal{C}_{f,\path'_{k-1}}(f) \mathcal{C}_{f,\path_{2m-2-k}''}(f)$$
for some $\path_k' \in \ca_{2,2,k}^+$ and $\path_{2m-1-k}'' \in \ca_{2,2,2m-1-k}$ (see Figure 2 for an illustration). Together with the identity $\lla f\pt{1} f,f \rra=0$, this observation leads to the formula
\begin{equation}\label{alg-3}
S_{f,2m-1}(f\otimes f)=\sum_{k=1}^{m-1} S^+_{f,2k}(f) S_{f,2m-2k}(f).
\end{equation}
By a similar argument we get
\begin{equation}\label{alg-4}
S_{f,2m-2}((f\pt{1} f) \otimes f)=\sum_{k=1}^{m-2} S_{f,2k}^+(f) S_{f,2m-2k-1}(f \pt{1} f).
\end{equation}

\begin{center}
\begin{figure}[!ht]\label{pic:splitting}

\begin{pspicture}(0,0)(15,6)


\psline(0,-0.5)(0,6)

\psline(-0.5,0)(15,0)

\psline(-0.1,1)(0.1,1) \psline(-0.1,2)(0.1,2) \psline(-0.1,3)(0.1,3) \psline(-0.1,4)(0.1,4) \psline(-0.1,5)(0.1,5)

\rput(-0.3,1){\tiny{$2$}} \rput(-0.3,2){\tiny{$4$}} \rput(-0.3,3){\tiny{$6$}} \rput(-0.3,4){\tiny{$8$}} \rput(-0.3,5){\tiny{$10$}}

\psline(0,-0.1)(0,0.1) \psline(1.5,-0.1)(1.5,0.1) \psline(3,-0.1)(3,0.1) \psline(4.5,-0.1)(4.5,0.1) \psline(6,-0.1)(6,0.1) \psline(7.5,-0.1)(7.5,0.1) \psline(9,-0.1)(9,0.1) \psline(10.5,-0.1)(10.5,0.1) \psline(12,-0.1)(12,0.1) \psline(13.5,-0.1)(13.5,0.1)

\rput(-0.3,-0.3){\tiny{$0$}} \rput(1.5,-0.3){\tiny{$1$}} \rput(3,-0.3){\tiny{$2$}} \rput(4.5,-0.3){\tiny{$3$}} \rput(6,-0.3){\tiny{$4$}} \rput(7.5,-0.3){\tiny{$5$}} \rput(9,-0.3){\tiny{$6$}} \rput(10.5,-0.3){\tiny{$7$}} \rput(12,-0.3){\tiny{$8$}}

\psline[linecolor=blue](0,2)(1.5,3) \psline[linecolor=blue](1.5,3)(3,3) \psline[linecolor=blue](3,3)(4.5,2) \psline[linecolor=blue](4.5,2)(6,2) \psline[linecolor=blue](6,2)(7.5,1) \psline[linecolor=red](7.5,1)(9,0) \psline[linecolor=red](9,0)(10.5,1) \psline[linecolor=red](10.5,1)(12,0)

\psline[linestyle=dotted](1.5,3)(4.5,5) \psline[linestyle=dotted](4.5,5)(10.5,1)

\rput(0.75,1.5){\tiny{$f\otimes f$}} \psline{->}(0.5,1.75)(0.1,1.9)

\end{pspicture}

\

\caption{Splitting up $\mathcal{C}_{f,\path_8}(f\otimes f)$. The blue part of the walk gives birth to $\mathcal{C}_{f,\path_5'}(f)$ with $\path_5'\in \ca^+_{2,2,6}$, while the red part corresponds to $\mathcal{C}_{f,\path_3''}(f)$ with $\path_3'' \in \ca_{2,2,4}$.}
\end{figure}
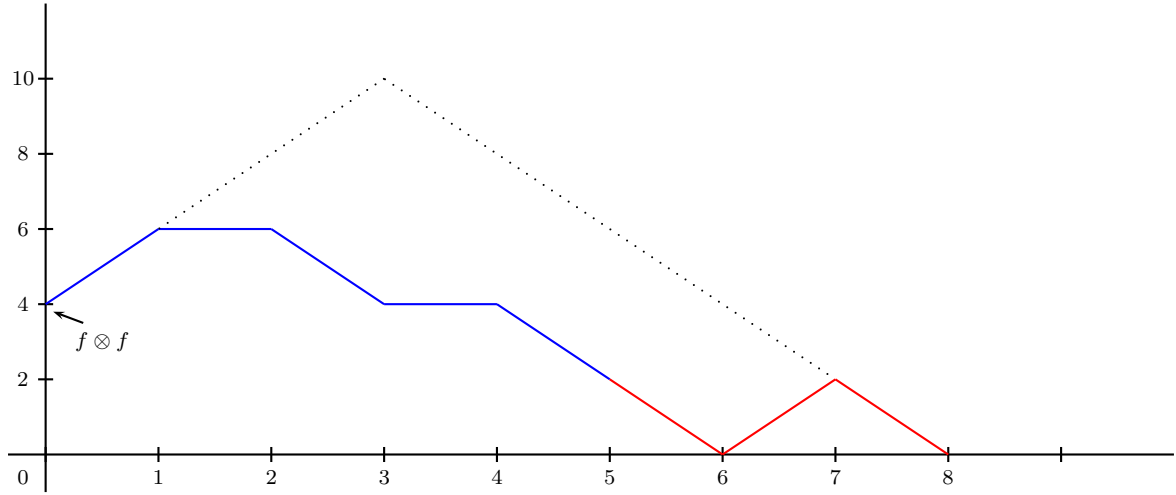
\end{center}

Going back to (\ref{alg-1}), the three formulae (\ref{alg-2}), (\ref{alg-3}) and (\ref{alg-4}) provide us with an iterative
algorithm for the computation of $S_{f,2m}(f)$. The only unknown quantities are the sums
$S_{f,2k}^+(f)$ for $k\in \{1,\ldots,m\}$. However, it turns out that the above reasoning can also be applied to
$S_{f,2k}^+(f)$ (instead of $S_{f,2k}(f)$) to give the self-contained iterative procedure:
$$\left\lbrace
\begin{array}{rcl}
S^+_{f,2m}(f) &=& S^+_{f,2m-1}(f\pt{1} f)+\sum_{k=1}^{m-1} S^+_{f,2k}(f) S^+_{f,2m-2k}(f),\\
S^+_{f,2m-1}(f \pt{1} f)&=&\frac{1}{2} S^+_{f,2m-2}(f)+\sum_{k=1}^{m-2} S^+_{f,2k}(f) S^+_{f,2m-2k-1}(f \pt{1} f).
\end{array}
\right.$$

Finally, we have proved the following result:
\begin{prop}\label{prop:algo-moments}
The even moments $E\lc \teti^{2m}\rc=S_{f,2m}(f)$ of the tetilla law are governed by the iterative algorithm:
$$\left\lbrace
\begin{array}{rcl}
S_{f,2m}(f) &=& S_{f,2m-2}(f)+S_{f,2m-1}(f\pt{1} f)+\sum_{k=1}^{m-1} S^+_{f,2k}(f) S_{f,2m-2k}(f),\\
S_{f,2m-1}(f \pt{1} f)&=&\frac{1}{2} S_{f,2m-2}(f)+\sum_{k=1}^{m-2} S^+_{f,2k}(f) S_{f,2m-2k-1}(f \pt{1} f),
\end{array}
\right.$$
with initial conditions
$$S_{f,2}(f)=1,  \quad S_{f,3}(f\pt{1} f)=\frac{1}{2}, \quad S_{f,2}^+(f)=1 \quad
\mbox{and}\quad S_{f,3}^+(f\pt{1} f)=\frac{1}{2}.$$
\end{prop}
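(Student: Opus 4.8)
The plan is to exploit the fact that the specific kernel $f$ realizes the tetilla law, so that $S_{f,2m}(f)=E[I_2(f)^{2m}]=E[\teti^{2m}]$, and then to organize the moment formula (\ref{form-gene-moments}) of Corollary \ref{cor:for-mo} according to the walk representation of $\ca_{2,p,l}$. Concretely, I would first note that $I_2(f)=\frac1{\sqrt2}(xy+yx)$ with $x=S_1$ and $y=S_2-S_1$ two free semicircular variables of unit variance, so the moments of $I_2(f)$ are indeed those of $\teti$. Next I would decompose the sum defining $S_{f,2m}(f)$ according to the value of the first contraction $r_1\in\{0,1,2\}$. The case $r_1=2$ produces the scalar $f\pt{2}f=\|f\|_2^2=1$ (by mirror symmetry of $f$) and, via the bouncing constraint $(iii)'$, forces the walk to restart from level $2$; this yields the term $S_{f,2m-1}(1)=S_{f,2m-2}(f)$. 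The cases $r_1=1$ and $r_1=0$ give the starting functions $f\pt{1}f$ and $f\otimes f$, hence the remaining two terms of (\ref{alg-1}).

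The second ingredient is the first-step decomposition (\ref{alg-2}) of $S_{f,2m-1}(f\pt{1}f)$. Here I would first establish by direct computation the two identities $(f\pt{1}f)\pt{1}f=\frac12 f$ and $\lla f\pt{1}f,f\rra=0$: computing $f\pt{1}f=\frac12\lp 1_{[0,1]}^{\otimes2}+1_{[1,2]}^{\otimes2}\rp$ makes both transparent, the second one because every cross term carries a factor $\lla 1_{[0,1]},1_{[1,2]}\rra=0$. Decomposing $S_{f,2m-1}(f\pt{1}f)$ on the next contraction, the full contraction ($r=2$) contributes $\lla f\pt{1}f,f\rra=0$ and drops out, the contraction $r=1$ produces $\frac12 f$ and hence the term $\frac12 S_{f,2m-2}(f)$, and $r=0$ produces the starting function $(f\pt{1}f)\otimes f$; this is exactly (\ref{alg-2}).

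The heart of the matter, and the step I expect to be the main obstacle, is the factorization of the tensor terms, namely (\ref{alg-3}) and (\ref{alg-4}). The idea is that along the chain of right-contractions the two blocks of $f\otimes f$ interact only through the walk: reading the walk associated with $\path_{2m-2}\in\ca_{2,4,2m-1}$, there is a well-defined first passage to $0$, and I would argue that the portion of the chain up to this first passage is a strictly positive excursion which evaluates to a scalar of the form $\mathcal{C}_{f,\path'_{k-1}}(f)$ with $\path'_k\in\ca^+_{2,2,k}$, while the remainder (after the bounce back to $2$) is an unconstrained chain $\mathcal{C}_{f,\path''_{2m-2-k}}(f)$ associated with some $\path''\in\ca_{2,2,2m-1-k}$. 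Summing over the length $2k$ of the excursion gives (\ref{alg-3}), and the same reasoning applied to $(f\pt{1}f)\otimes f$, whose left block is now $f\pt{1}f$, gives (\ref{alg-4}). The delicate points to make rigorous are the uniqueness of the splitting index $k$ and the verification that no ``crossing'' contribution survives: this is precisely where the identity $\lla f\pt{1}f,f\rra=0$ is used, to guarantee that the excursion factor and the remainder genuinely decouple into a product of scalars rather than leaving a residual mixed term.

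Finally I would assemble the recursion: substituting (\ref{alg-3}) into (\ref{alg-1}) yields the first identity of the Proposition, and substituting (\ref{alg-4}) into (\ref{alg-2}) yields the second. The initial conditions follow by inspecting the very short lists of admissible walks: $S_{f,2}(f)=f\pt{2}f=\|f\|_2^2=1$; the only walk contributing to $S_{f,3}(f\pt{1}f)$ gives $\lp(f\pt{1}f)\pt{1}f\rp\pt{2}f=\frac12\|f\|_2^2=\frac12$; and the analogous strictly positive walks give $S^+_{f,2}(f)=1$ and $S^+_{f,3}(f\pt{1}f)=\frac12$. The self-contained recursion for the auxiliary quantities $S^+_{f,2k}(f)$ then follows verbatim from the same first-step-plus-factorization analysis applied to positive excursions, which closes the algorithm.
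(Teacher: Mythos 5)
Your proposal follows essentially the same route as the paper's: the first-step decomposition (\ref{alg-1}), the two kernel identities $(f\pt{1}f)\pt{1}f=\tfrac12 f$ and $\lla f\pt{1}f,f\rra=0$ yielding (\ref{alg-2}), the excursion-splitting of the tensor terms yielding (\ref{alg-3})--(\ref{alg-4}), and the verbatim transfer of the whole analysis to the strictly positive sums $S^+$, with the same verification of the initial conditions. The only point to adjust is the location of the split: it occurs at the first passage of the walk to level $2$ (equivalently, the first time the arguments sitting to the right of the untouched left block are exhausted, so that the accumulated function becomes a scalar multiple of $f$), not at the first passage to level $0$ followed by a bounce; with that correction your sketch coincides with the argument of Subsection \ref{subsec:moments-tetilla}.
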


\section{Proof of the main results}

For the sake of conciseness, we introduce the notation $h_n \approx g_n$ (for sequences of functions $h_n,g_n \in L^2(\R_+^p)$, $p\geq 0$) to indicate
that $h_n-g_n \to 0$ as $n$ tends to infinity.

\subsection{Sketch of the proof}

Before going into the technical details of the proof of Theorem \ref{mainthm}, let us try to give an intuitive
idea of the main lines of our reasoning.
To this end, let us first go back to the semicircular case, that is, to the Fourth Moment Theorem for Wigner integrals, which was
first established in \cite{knps} and then re-examined in \cite{yetanother}. Specifically, let $(f_n)\subset L^2(\R_+^q)$ be a
sequence of mirror-symmetric normalized functions; in this case, if $E\lc I_q(f_n)^4 \rc \to 2=E\lc I_1(1_{[0,1]})^2\rc $ as $n$ tends to infinity,
then $I_q(f_n)$ converges in distribution to $I_1(1_{[0,1]})$, that is, to the semicircular law. In the two afore-mentionned references, it appears clearly that the arguments leading to this convergence criterion can be organized around two successive steps:

\

\noindent
\emph{Step 1}. We observe that the convergence of the fourth moment towards $2$ forces the asymptotic behaviour of the (non-trivial) contractions of $f_n$. Indeed, from the general formula
\begin{equation}\label{starting-semi}
E\lc I_q(f_n)^4\rc=2+\sum_{r=1}^{q-1} \| f_n \pt{r} f_n \|_{2q-2r}^2,
\end{equation}
one immediately deduces that $f_n \pt{r} f_n \approx 0$ for $r\in \{1,\ldots,q-1\}$.

\noindent
\emph{Step 2}. Now the limit of $f_n \pt{r} f_n$ is known, one can make use of the formula (\ref{form-gene-moments}) to compute the limit of
$E\lc I_q(f_n)^l\rc$ for every $l$. The semicircularity of the limit is then a consequence of the following
elementary splitting:
\begin{equation}\label{elem-splitting}
E\lc I_q(f_n)^l \rc =\sum_{\substack{ \path_{l-1} \in \mathcal{A}_{q,l}\\ \path_{l-1} \in \{0,q\}^l }}((\ldots(f\pt{r_1} f) \pt{r_2} f)  \ldots \pt{r_{l-1}} f +\sum_{\substack{ \path_{l-1} \in \mathcal{A}_{q,l}\\ \path_{l-1} \notin \{0,q\}^l }} ((\ldots(f\pt{r_1} f) \pt{r_2} f)\ldots \pt{r_{l-1}} f.
\end{equation}
Indeed, thanks to \emph{Step 1} and Cauchy-Schwarz, it is easy to see that the second sum of (\ref{elem-splitting}) tends to $0$,
as each string of contractions therein involves at least one single contraction of the type
$f_n \pt{r} f_n$ with $r\in \{1,\ldots,q-1\}$. To conclude the proof, it remains to notice that, since $\| f_n \|_q=1$, one has
\begin{eqnarray*}
\sum_{\substack{ \path_{l-1} \in \mathcal{A}_{q,l}\\ \path_{l-1} \in \{0,q\}^l }}((\ldots(f\pt{r_1} f) \pt{r_2} f)  \ldots \pt{r_{l-1}} f& = &\sum_{\path_{l-1} \in \mathcal{A}_{2,l}} ((\ldots(1_{[0,1]}\pt{r_1} 1_{[0,1]}) \pt{r_2} 1_{[0,1]})  \ldots \pt{r_{l-1}} 1_{[0,1]}\\
&=& E\lc I_1(1_{[0,1]})^l \rc.
\end{eqnarray*}

\

In \cite{np-poisson}, this two-step procedure has been adapted to the case where the limit is free Poisson distributed,
represented e.g. by the integral $I_2(1_{[0,1]} \otimes 1_{[0,1]})$. In this situation, as a substitute to (\ref{starting-semi}), we
may use the more sophisticated starting relation
\begin{equation}\label{start-poisson}
E\left[ \left(I_q(f_n)^2- I_q(f_n)\right)^2 \right]=2+\| f_n \pt{q/2} f_n -f_n \|^2_q+\sum_{r\in \{1,\ldots q-1\} \backslash \{\frac{q}{2} \}} \|f_n \pt{r} f_n \|_{2q-2r}^2,
\end{equation}
and then notice that, whenever both the third and four moments converge,
then
\begin{equation}\label{start-poisson2}
E \lc \left(I_q(f_n)^2- I_q(f_n)\right)^2 \rc \to E \lc \left(I_2(1_{[0,1]} \otimes 1_{[0,1]})^2- I_2(1_{[0,1]}
\otimes 1_{[0,1]})\right)^2
\rc=2,
\end{equation}
so that, combining (\ref{start-poisson}) with (\ref{start-poisson2}), one deduces $f_n \pt{r} f_n \approx 0$ for $r\in \{1,\ldots q-1\}
\backslash \{\frac{q}{2} \}$, as well as $f_n \pt{q/2} f_n \approx f_n$ (the situation where $q$ is an odd integer can be
excluded using elementary arguments).

\

In these two cases (semicircular and Poisson), the following general idea emerges from the proof: by assuming the convergence of the (four)
first moments, one can show that the contractions of $f_n$ (asymptotically) mimick the behaviour of the contractions of the reference kernel.
Thus, in the Poisson case, the relation $f_n \pt{q/2} f_n \approx f_n$ must be seen as the (asymptotic) equivalent of the property
\[
(1_{[0,1]} \otimes 1_{[0,1]}) \pt{1} (1_{[0,1]} \otimes 1_{[0,1]})=1_{[0,1]} \otimes 1_{[0,1]}
\]
characterizing the target kernel.

\smallskip

This general idea is at the core of our reasoning as well, even if the situation turns out to be more intricate in our context.
As we have seen in Subsection \ref{subsec:moments-tetilla}, the characteristic property of
$f=\frac{1}{\sqrt{2}} (1_{[0,1]}\otimes 1_{[1,2]}+1_{[1,2]} \otimes 1_{[0,1]})$
(that is, the property that allows us to compute the moments of the tetilla law)
consists in the two relations
$$(f \pt{1} f)\pt{1} f=\frac{1}{2} f \quad , \quad (f\pt{1} f) \pt{2} f=0, $$
which involve this time double contractions. Thus, the above two-step procedure must be remodelled so as to put the double contractions of
$f_n$ as the central objects of the proof, and this is precisely where the sixth moment comes into the picture
(see the general formula (\ref{six-moment})). In brief, we will adapt the previous machinery in the following way:

\

\noindent
\emph{Step 1'}. We look for a relation similar to (\ref{starting-semi}) or (\ref{start-poisson}),
that permits to compare the asymptotic behaviour of the double contractions of $f_n$
with the double contractions of $f$, when assuming the convergence of the fourth and sixth moments. This is the aim of Subsection \ref{subsec:implic-1-2} (the expected formula corresponds to (\ref{tend})), and it leads to the proof of the implication $(i) \Rightarrow (ii)$ of Theorem \ref{mainthm}.

\medskip

\noindent
\emph{Step 2'}. In Subsection \ref{subsec:implic-2-3}, we go back to the formula (\ref{form-gene-moments}) so as to exhibit the moments of
the tetilla law in the limit. The strategy is here different from the semicircular (or Poisson) case, since it cannot be reduced to a
tricky splitting such as (\ref{elem-splitting}). Instead, we show that the sequence of moments $(E\lc I_q(f_n)^l \rc)_{l\geq 1}$ is
asymptotically (with respect to $n$) governed by the algorithm described in Proposition \ref{prop:algo-moments},
from which it becomes clear that the limit is distributed according to the tetilla law.

\subsection{Proof of Theorem \ref{mainthm}, $(i) \Longrightarrow (ii)$}\label{subsec:implic-1-2}

For the sake of clarity, we divide the proof into two (similar) parts according to the parity of $q$.

\

Let us first assume that $q\geq 2$ is an {\it even} integer, and let
$(f_n)$ be a sequence of mirror-symmetric functions of $L^2(\R_+^q)$ verifying $\|f_n\|_q^2=1$ for all $n$.
In the sequel, for $k\in\{0,\ldots,\frac{3q}{2}\}$, we write $\mathcal{B}_{2k}$ to indicate the set of those
integers $r\in\{0,\ldots,q\}$ such that \[
0\leq \frac{3q}2-k-r\leq q\wedge(2q-2r)
\]
that is, the set of integers $r$ for which the double contraction
$(f_n\stackrel{r}{\frown} f_n) \stackrel{\frac{3q}2-k-r}{\frown} f_n$ is well-defined (as an element of $L^2(\R_+^{2k})$).
Observe that $\mathcal{B}_{2k}$ merely reduces to $\{0,\ldots,\frac{3q}2-k\}$ when $k\geq \frac{q}{2}$.

\smallskip

The implication $(i)  \Rightarrow(ii)$ of Theorem \ref{mainthm} can be reformulated as follows:

\begin{prop}\label{prop-main}
Assume $E[I_q(f_n)^6]\to 8.25$ and $E[I_q(f_n)^4]\to 2.5$ as $n\to\infty$.
Then, as $n\to\infty$,
\begin{enumerate}
\item[$(a)$] $(f_n\stackrel{r}{\frown} f_n) \!\!\stackrel{\frac{3q}2-k-r}{\frown} \!\!f_n\to 0,\,
k\in\left\{0,\ldots,\frac{q}2-1\right\}\cup\left\{\frac{q}2+1,\ldots,\frac{3q}2-1\right\},\, r\in \mathcal{B}_{2k} \backslash \{0,\frac{3q}{2}-k\}$;
\item[$(b)$] $-\frac12 f_n + \sum_{r=1}^{q-1} (f_n \stackrel{r}{\frown} f_n)  \stackrel{q-r}{\frown} f_n\to 0$.
\end{enumerate}
\end{prop}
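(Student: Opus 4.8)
Following \emph{Step 1'} of the sketch, the plan is to produce an exact algebraic identity, valid for every mirror-symmetric $f_n$ with $\|f_n\|_q=1$, that writes a fixed affine combination of $E\lc I_q(f_n)^4 \rc$ and $E\lc I_q(f_n)^6 \rc$ as a genuine \emph{sum of squared norms}, whose summands are exactly the quantities appearing in $(a)$ and in $(b)$. Assumption $(i)$ will then force that combination to the precise value that makes the sum of squares vanish asymptotically, so that every individual summand tends to $0$, which is $(a)$ and $(b)$. (I treat here the announced even-$q$ case; the odd case runs along the same lines.)

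\textbf{An exact relation.} First I would record the two moments through the multiplication formula (\ref{e:mult}) and the isometry (\ref{e:freeisometry}). Writing $J_n:=I_q(f_n)^2-1=\sum_{r=0}^{q-1}I_{2q-2r}(f_n\cont{r}f_n)$, orthogonality of distinct chaoses gives the standard
\[
E\lc I_q(f_n)^4 \rc = 2+\sum_{r=1}^{q-1}\|f_n\cont{r}f_n\|_{2q-2r}^2 ,
\]
so that $(i)$ is equivalent to $\sum_{r=1}^{q-1}\|f_n\cont{r}f_n\|_{2q-2r}^2\to\frac12$. Since $I_q(f_n)$ is self-adjoint and centered, one has $I_q(f_n)^3=I_q(f_n)+J_n\,I_q(f_n)$, and a direct computation (using $E\lc J_n\rc=0$, traciality, and $\|J_n\|^2=E\lc I_q(f_n)^4 \rc-1$) yields the clean identity
\[
E\lc I_q(f_n)^6 \rc = 2\,E\lc I_q(f_n)^4 \rc -1+\|J_n\,I_q(f_n)\|^2 .
\]
Under $(i)$ the left-hand side and the first two right-hand terms are known, so $\|J_n\,I_q(f_n)\|^2\to \frac{17}{4}$; everything is thus reduced to decomposing this last squared norm.

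\textbf{Decomposing $\|J_n I_q(f_n)\|^2$.} Expanding $J_n I_q(f_n)=\sum_{r=0}^{q-1}\sum_{s}I_{3q-2r-2s}\big((f_n\cont{r}f_n)\cont{s}f_n\big)$ and collecting by chaos order $2k$ (only even orders occur, since $q$ is even), the isometry gives $\|J_n I_q(f_n)\|^2=\sum_{k}\big\|\sum_{r}(f_n\cont{r}f_n)\cont{\frac{3q}{2}-k-r}f_n\big\|_{2k}^2$. Inside each block I would separate three kinds of terms: the two \emph{boundary} terms $r=0$ and $r=\frac{3q}{2}-k$, for which one contraction order is $0$, so the factor degenerates into a pure tensor $f_n\otimes(f_n\cont{s}f_n)$ or $(f_n\cont{r}f_n)\otimes f_n$ whose squared norm is a single-contraction norm $\|f_n\cont{s}f_n\|^2$; the \emph{genuine} double contractions with both orders $\geq 1$, which are exactly the objects in $(a)$; and the special order-$q$ block ($k=\frac q2$), whose kernel equals $f_n+\sum_{r=1}^{q-1}(f_n\cont{r}f_n)\cont{q-r}f_n$. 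Completing the square around $\tfrac12 f_n$ turns the squared norm of this last block into $\big\|-\tfrac12 f_n+\sum_{r=1}^{q-1}(f_n\cont{r}f_n)\cont{q-r}f_n\big\|_q^2$ plus the cross term $3\,\lla f_n,\sum_{r=1}^{q-1}(f_n\cont{r}f_n)\cont{q-r}f_n\rra$, which an iterated-contraction identity (expected of the form $\lla f_n,(f_n\cont{r}f_n)\cont{q-r}f_n\rra=\|f_n\cont{r}f_n\|^2$) should rewrite in terms of the single-contraction norms already controlled by the fourth moment.

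\textbf{Conclusion and main obstacle.} Summing all blocks, the top term contributes $\|f_n^{\otimes 3}\|^2=1$, the boundary terms contribute $2\sum_{r=1}^{q-1}\|f_n\cont{r}f_n\|^2$, and the order-$q$ cross term contributes $3\sum_{r=1}^{q-1}\|f_n\cont{r}f_n\|^2$; substituting $\sum_{r=1}^{q-1}\|f_n\cont{r}f_n\|^2=E\lc I_q(f_n)^4 \rc-2$ I expect to reach the identity
\[
E\lc I_q(f_n)^6 \rc = 7\,E\lc I_q(f_n)^4 \rc -\tfrac{37}{4}+ \Big\|-\tfrac12 f_n+\sum_{r=1}^{q-1}(f_n\cont{r}f_n)\cont{q-r}f_n\Big\|_q^2+\sum_{(a)}\big\|(f_n\cont{r}f_n)\cont{\frac{3q}{2}-k-r}f_n\big\|_{2k}^2 ,
\]
the last sum running over the indices in $(a)$; the constants $7$ and $-\frac{37}{4}$ are consistent both with the tetilla kernel (where $E\lc \teti^6\rc=\frac{33}{4}=7\cdot\frac52-\frac{37}{4}$) and with any exactly contraction-free kernel (where $E\lc I_q(f_n)^6 \rc=5$). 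Under $(i)$ the affine part then tends to $\frac{33}{4}$, equal to the limit of $E\lc I_q(f_n)^6 \rc$, so the entire remaining sum of non-negative squares tends to $0$, hence each summand does, giving $(a)$ and $(b)$. The real difficulty is establishing this identity exactly: because the increments of $S$ do not commute, the kernels $(f_n\cont{r}f_n)\cont{s}f_n$ are not symmetric, so expanding each block produces a quadratically large collection of cross terms $\lla(f_n\cont{r}f_n)\cont{s}f_n,(f_n\cont{r'}f_n)\cont{s'}f_n\rra$; collapsing all of these (together with the order-$q$ cross term) onto the single-contraction norms already governed by the fourth moment is precisely the set of new contraction identities flagged in the introduction, and it is where essentially all of the work lies.
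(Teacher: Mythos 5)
Your overall architecture is the same as the paper's: square a cubic polynomial in $I_q(f_n)$, decompose by chaos order via the multiplication formula, and read off an affine relation between the fourth and sixth moments plus a non-negative remainder. (The paper uses $I_q(f_n)^3-\tfrac52 I_q(f_n)$ rather than your $J_nI_q(f_n)=I_q(f_n)^3-I_q(f_n)$, but after your completion of the square around $\tfrac12 f_n$ this is immaterial, and your affine part $E\lc I_q(f_n)^6\rc=7E\lc I_q(f_n)^4\rc-\tfrac{37}{4}+(\cdots)$ is exactly what the paper's identity (\ref{tend}) amounts to.) However, there is a genuine gap, and it sits precisely where you locate "essentially all of the work": the cross terms. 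Each chaos block contributes $\big\|\sum_{r\in\mathcal{B}_{2k}}(f_n\cont{r}f_n)\cont{\frac{3q}{2}-k-r}f_n\big\|_{2k}^2$, and since the iterated contractions are not symmetric functions, the off-diagonal inner products $\lla (f_n\cont{r}f_n)\cont{s}f_n,(f_n\cont{r'}f_n)\cont{s'}f_n\rra$ neither vanish nor have a sign. Your conjectured final identity, in which the remainder is a plain sum of \emph{individual} squared norms over the indices of $(a)$, is therefore not correct as stated. What is true (Lemma \ref{lm1} of the paper, proved via the contraction-duality relations of Proposition \ref{contr-link}) is that the totality of cross terms in the blocks of order $2k>q$ equals $2\sum_{k=0}^{q/2-1}\big\|\sum_{r\in\mathcal{B}_{2k}}(\cdots)\big\|_{2k}^2$, so the low-order blocks end up entering the remainder as $3\big\|\sum_{r}(\cdots)\big\|_{2k}^2$ — squared norms of \emph{sums}, with a factor $3$ — while only the high-order blocks ($k>q/2$) split into individual squared norms. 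Without this identity the remainder is not manifestly non-negative and the moment hypothesis gives you nothing.

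A second, smaller gap follows from the first: even once the correct identity is in place, the vanishing of $\big\|\sum_{r\in\mathcal{B}_{2k}}(f_n\cont{r}f_n)\cont{\frac{3q}{2}-k-r}f_n\big\|_{2k}$ for $k<q/2$ does not by itself yield the \emph{termwise} convergences asserted in $(a)$ for those $k$. The paper extracts them by a separate step: Proposition \ref{contr-link}(3) converts $\big\|(f_n\cont{r}f_n)\cont{\frac{3q}{2}-k-r}f_n\big\|_{2k}^2$ into an inner product that Cauchy--Schwarz bounds by $\big\|(f_n\cont{s}f_n)\cont{\frac{3q}{2}-l-s}f_n\big\|_{2l}$ with $l=q-k>q/2$, a quantity already known to tend to $0$ from the high-order blocks. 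Your proposal would need both of these ingredients — the Fubini-type duality identities for double contractions and the resulting cross-term resummation — to be carried out before the conclusion can be drawn; they are the actual content of the proof rather than a technical afterthought.
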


The proof of Proposition \ref{prop-main} mainly relies on the following technical lemma, whose proof is postponed to the appendix.

\begin{lemma}\label{lm1}
We have
\begin{eqnarray*}
&&\sum_{k=\frac{q}2+1}^{\frac{3q}2-1} \left\|
\sum_{r\in\mathcal{B}_{2k}} (f_n\stackrel{r}{\frown} f_n) \stackrel{\frac{3q}2-k-r}{\frown} f_n
\right\|_{2k}^2 \\
&=&\sum_{k=\frac{q}2+1}^{\frac{3q}2-1}
\sum_{r\in\mathcal{B}_{2k}} \left\|(f_n\stackrel{r}{\frown} f_n) \stackrel{\frac{3q}2-k-r}{\frown} f_n
\right\|^2_{2k}
+2\sum_{k=0}^{\frac{q}2-1} \left\|
\sum_{r\in\mathcal{B}_{2k}} (f_n\stackrel{r}{\frown} f_n) \stackrel{\frac{3q}2-k-r}{\frown} f_n
\right\|^2_{2k}.
\end{eqnarray*}
\end{lemma}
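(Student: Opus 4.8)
The plan is to reduce the asserted identity to a statement purely about the cross terms that appear when the squared norms on the left-hand side are expanded. Writing $C_{k,r}:=(f_n\pt{r}f_n)\pt{\frac{3q}{2}-k-r}f_n\in L^2(\R_+^{2k})$, bilinearity of the inner product gives, for each fixed $k$, the expansion $\|\sum_{r\in\mathcal{B}_{2k}}C_{k,r}\|_{2k}^2=\sum_{r\in\mathcal{B}_{2k}}\|C_{k,r}\|_{2k}^2+\sum_{r\neq r'}\langle C_{k,r},C_{k,r'}\rangle_{2k}$. Summing over $k\in\{\frac{q}{2}+1,\ldots,\frac{3q}{2}-1\}$ and subtracting the diagonal part (which is exactly the first term on the right-hand side of the lemma), the claim becomes equivalent to
\[
\sum_{k=\frac{q}{2}+1}^{\frac{3q}{2}-1}\sum_{\substack{r,r'\in\mathcal{B}_{2k}\\ r\neq r'}}\langle C_{k,r},C_{k,r'}\rangle_{2k}=2\sum_{k=0}^{\frac{q}{2}-1}\Big\|\sum_{r\in\mathcal{B}_{2k}}C_{k,r}\Big\|_{2k}^2 .
\]
Thus the whole content is that the off-diagonal terms of the upper block reassemble into twice the full squared-norm blocks of the lower block.

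The mechanism I would use for this reassembly is a re-reading of the underlying contraction diagrams. Each scalar $\langle C_{k,r},C_{k,r'}\rangle_{2k}$ is, after writing out all the integrals, a complete contraction of six copies of $f_n$: three of them are glued into $C_{k,r}$, three into $C_{k,r'}$, and the $2k$ remaining free arguments of the two double contractions are paired off by the inner product. Using the mirror symmetry $f_n=f_n^{*}$ together with the adjunction identity for contractions — which rewrites $\langle f\pt{a}g,h\rangle$ as $\langle g,\psi\rangle$ for a single contraction $\psi$ of $f$ and $h$ — one can move contractions freely across the inner product. This turns the six-fold complete contraction into an object that no longer remembers the original grouping into two triples, and so one becomes free to regroup the six copies into two new triples.

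The decisive step is then to show that, for $k$ in the upper range, such a regrouping cuts the diagram along $2k'$ edges for some $k'\in\{0,\ldots,\frac{q}{2}-1\}$, so that the scalar is recognized as a lower-block inner product $\langle C_{k',r''},C_{k',r'''}\rangle_{2k'}$, and conversely that every lower-block product arises this way. I would verify this by direct bookkeeping on the contraction orders: the admissibility constraints encoded by $\mathcal{B}_{2k}$ force the double contractions to have the partially factorized (tensor-like) shape needed for the cut to exist, and they pin down the maps $(k,r)\mapsto(k',r'')$ and $(k,r')\mapsto(k',r''')$. The factor $2$ should reflect the two admissible ways to perform the cut, equivalently the symmetry $(r,r')\leftrightarrow(r',r)$, and the value $k=\frac{q}{2}$ is excluded precisely as the self-dual middle order, for which the re-cutting returns a term of the same block rather than a lower one. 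As a sanity check I already treated $q=2$ by hand: there $\mathcal{B}_0=\{1\}$, $C_{0,1}=\langle f_n,f_n\pt{1}f_n\rangle_2$, and $\langle C_{2,0},C_{2,1}\rangle_4=\langle f_n,f_n\pt{1}f_n\rangle_2^2=(C_{0,1})^2$, so that the two ordered pairs $(0,1)$ and $(1,0)$ in the upper block reproduce exactly $2(C_{0,1})^2=2\|\sum_{r\in\mathcal{B}_0}C_{0,r}\|_0^2$.

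The main obstacle is precisely the combinatorial core of the previous paragraph: setting up the re-cutting correspondence uniformly in $(k,r,r')$ and proving it is a bijection onto the lower-block index pairs with the correct multiplicity. The difficulty is that the internal structure of $C_{k,r}$ is not a clean tensor product for every admissible $r$, so the disconnection has to be read off from the global six-fold diagram rather than from each triple separately; this in turn requires careful control of the reversal conventions in the definition \eqref{contr} of the contractions and of the boundary constraints defining $\mathcal{B}_{2k}$, which is exactly where the planar (non-crossing) nature of the free contractions is essential. Once the correspondence is established, matching the scalars on the two sides is then a routine application of the adjunction identity and Fubini's theorem.
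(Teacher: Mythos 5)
Your opening reduction is the right one and coincides with the paper's: expanding each squared norm and cancelling the diagonal part, the lemma is equivalent to
\[
2\sum_{k=\frac{q}{2}+1}^{\frac{3q}{2}-1}\ \sum_{r<r'\in\mathcal{B}_{2k}}\big\langle (f_n\pt{r}f_n)\pt{\frac{3q}2-k-r}f_n,\ (f_n\pt{r'}f_n)\pt{\frac{3q}2-k-r'}f_n\big\rangle_{2k}
=2\sum_{k=0}^{\frac{q}{2}-1}\Big\|\sum_{r\in\mathcal{B}_{2k}}(f_n\pt{r}f_n)\pt{\frac{3q}2-k-r}f_n\Big\|_{2k}^2,
\]
and the mechanism you invoke --- reading each scalar as a complete pairing of six copies of $f_n$ and regrouping the six copies into two new triples via Fubini and mirror symmetry --- is exactly the one used in the paper. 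The problem is that your argument stops where the work begins: you yourself flag the construction of the re-cutting correspondence, uniformly in $(k,r,r')$ and with the correct multiplicities, as the main obstacle, and you do not carry it out. What you have is a plan plus one sanity check at $q=2$, and that check is degenerate: for $q=2$ the only lower block is $\mathcal{B}_0=\{1\}$, a singleton, so it cannot detect whether the multiplicities come out right when the lower blocks have several elements. The crux is precisely there: after dividing by $2$, each lower diagonal term must be produced exactly once and each lower unordered off-diagonal inner product exactly twice, a non-uniform count that your ``two ways to cut'' heuristic does not by itself deliver.

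The missing content is supplied in the paper by Proposition \ref{contr-link} together with a two-case decomposition. Writing $s=\tfrac{3q}2-k-r$ and $s'=\tfrac{3q}2-k-r'$, one has $r+s=r'+s'=\tfrac{3q}2-k\le q$ for $k$ in the upper range, and the upper cross terms are split according to whether $r'+s\le q$ or $r'+s>q$ (equivalently, whether $r'-r$ equals or exceeds $k-\tfrac q2$). Each case is handled by its own explicit identity (points 2 and 1 of Proposition \ref{contr-link}, proved by writing out the sixfold integral and regrouping), which sends the inner product in $2k>q$ variables to one in $q-2(r'-r)$, respectively $2(q-k)$, variables; after the change of indices, one group reindexes onto $\sum_{s\le s'}$ over the lower blocks and the other onto $\sum_{s<s'}$, and adding the two yields precisely the $1$-versus-$2$ multiplicities above. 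None of this is automatic from ``adjunction'': the hypotheses of the two identities are exactly the case distinction, and identifying the target block and target pair is the whole proof. So your approach is sound and matches the paper's in outline, but as submitted it has a genuine gap at its decisive step.
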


Using the multiplication formula twice leads to
\[
I_q(f_n)^3 = \sum_{r=0}^q \sum_{s=0}^{(2q-2r)\wedge q} I_{3q-2r-2s}((f_n\stackrel{r}{\frown}f_n)\stackrel{s}{\frown} f_n),
\]
or equivalently, after setting $k=3q-2r-2s$,
\[
I_q(f_n)^3 = \sum_{k=0}^{\frac{3q}2} I_{2k}\left(\sum_{r\in\mathcal{B}_{2k}}(f_n\stackrel{r}{\frown} f_n)\stackrel{\frac{3q}2-k-r}{\frown} f_n\right).
\]
We deduce, using moreover that $\|f_n\|_q^2=f_n\stackrel{q}{\frown}f_n =1$, that
\begin{eqnarray*}
I_q(f_n)^3 -\frac52 I_q(f_n)
&=& \sum_{k=0}^{\frac{q}2-1} I_{2k}\left(\sum_{r\in\mathcal{B}_{2k}}(f_n\stackrel{r}{\frown} f_n)\stackrel{\frac{3q}2-k-r}{\frown} f_n\right)
+I_q\left(-\frac12f_n+\sum_{r=1}^{q-1}(f_n \stackrel{r}{\frown} f_n)  \stackrel{q-r}{\frown} f_n \right)\\
&& +\sum_{k=\frac{q}2+1}^{\frac{3q}{2}-1} I_{2k}\left(\sum_{r\in\mathcal{B}_{2k}}(f_n\stackrel{r}{\frown} f_n)\stackrel{\frac{3q}2-k-r}{\frown} f_n\right)
+ I_{3q}(f_n\otimes f_n\otimes f_n).
\end{eqnarray*}
Hence
\begin{eqnarray}
E\big[(I_q(f_n)^3 -\frac52 I_q(f_n))^2\big]
&=& \sum_{k=0}^{\frac{q}2-1} \left\|\sum_{r\in\mathcal{B}_{2k}}(f_n\stackrel{r}{\frown} f_n)\stackrel{\frac{3q}2-k-r}{\frown} f_n\right\|^2_{2k}
+\left\|-\frac12f_n+\sum_{r=1}^{q-1}(f_n \stackrel{r}{\frown} f_n)  \stackrel{q-r}{\frown} f_n \right\|^2_q \nonumber\\
&& +\sum_{k=\frac{q}2+1}^{\frac{3q}{2}-1} \left\|\sum_{r\in\mathcal{B}_{2k}}(f_n\stackrel{r}{\frown} f_n)\stackrel{\frac{3q}2-k-r}{\frown} f_n\right\|^2_{2k}
+ 1,\label{six-moment}
\end{eqnarray}
implying in turn, thanks to Lemma \ref{lm1},
\begin{eqnarray}
&&E\big[(I_q(f_n)^3 -\frac52 I_q(f_n))^2\big]- 1 - 2\sum_{r=1}^{q-1} \| f_n\stackrel{r}{\frown} f_n \|^2_{2q-2r}\notag\\
&=& 3\sum_{k=0}^{\frac{q}2-1} \left\|\sum_{r\in\mathcal{B}_{2k}}(f_n\stackrel{r}{\frown} f_n)\stackrel{\frac{3q}2-k-r}{\frown} f_n\right\|^2_{2k}
+\left\|-\frac12f_n+\sum_{r=1}^{q-1}(f_n \stackrel{r}{\frown} f_n)  \stackrel{q-r}{\frown} f_n \right\|^2_q\notag\\
&& +\lc \sum_{k=\frac{q}2+1}^{\frac{3q}{2}-1} \sum_{r\in\mathcal{B}_{2k}}
\left\|(f_n\stackrel{r}{\frown} f_n)\stackrel{\frac{3q}2-k-r}{\frown} f_n\right\|^2_{2k}
- 2\sum_{r=1}^{q-1} \| f_n\stackrel{r}{\frown} f_n \|^2_{2q-2r}\rc\notag\\
&=& 3\sum_{k=0}^{\frac{q}2-1} \left\|\sum_{r\in\mathcal{B}_{2k}}(f_n\stackrel{r}{\frown} f_n)\stackrel{\frac{3q}2-k-r}{\frown} f_n\right\|^2_{2k}
+\left\|-\frac12f_n+\sum_{r=1}^{q-1}(f_n \stackrel{r}{\frown} f_n)  \stackrel{q-r}{\frown} f_n \right\|^2_q\notag\\
&& +\sum_{k=\frac{q}2+1}^{\frac{3q}{2}-1}
\sum_{\stackrel{r\in \mathcal{B}_{2k}}{\stackrel{r\neq \frac{3q}2-k}{r\neq 0}}}
\left\|(f_n\stackrel{r}{\frown} f_n)\stackrel{\frac{3q}2-k-r}{\frown} f_n\right\|^2_{2k}.\label{tend}
\end{eqnarray}

On the other hand, the multiplication formula, together with $\|f_n\|^2_q=f_n\stackrel{q}{\frown}f_n =1$, yields
\[
I_q(f_n)^2 = I_{2q}(f_n\otimes f_n) + 1+\sum_{r=1}^{q-1} I_{2q-2r}(f_n\stackrel{r}{\frown}f_n),
\]
so that \[
E[I_q(f_n)^4]=2+\sum_{r=1}^{q-1} \| f_n\stackrel{r}{\frown} f_n \|^2_{2q-2r}.
\]
This latter fact, combined with $E[I_q(f_n)^6]\to 8.25$ and $E[I_q(f_n)^4]\to 2.5$ as $n\to\infty$, leads to
\begin{equation}\label{tendverszero}
E\left[\left(I_q(f_n)^3 -\frac52 I_q(f_n)\right)^2\right]- 1 - 2\sum_{r=1}^{q-1} \| f_n\stackrel{r}{\frown} f_n \|_{2q-2r}^2\to 0
\quad\mbox{as $n\to\infty$.}
\end{equation}
By comparing (\ref{tendverszero}) with (\ref{tend}), we get, as $n\to\infty$,
that
\begin{equation}\label{tendverszero1}
(f_n\stackrel{r}{\frown} f_n) \stackrel{\frac{3q}2-k-r}{\frown} f_n\to 0,\quad k\in\left\{\frac{q}2+1,\ldots,\frac{3q}2-1\right\},\,
r\in \mathcal{B}_{2k} \backslash \{0,\frac{3q}{2}-k\}
\end{equation}
and that
\begin{equation}\label{tendverszero3}
-\frac12 f_n + \sum_{r=1}^{q-1} (f_n \stackrel{r}{\frown} f_n)  \stackrel{q-r}{\frown} f_n\to 0.
\end{equation}
Now, for $k\in\{0,\ldots,\frac{q}2-1\}$ and $r\in\mathcal{B}_{2k}$,
we have, thanks to the third point of Proposition \ref{contr-link} (see the appendix),
\begin{eqnarray}
\left\|(f_n\stackrel{r}{\frown} f_n)\stackrel{\frac{3q}2-k-r}{\frown} f_n\right\|^2_{2k}
&=&\left\langle
(f_n\stackrel{q-r}{\frown} f_n)\stackrel{r+k-\frac{q}2}{\frown} f_n,
(f_n\stackrel{\frac{q}2+k-r}{\frown} f_n)\stackrel{r}{\frown} f_n
\right\rangle_{2q-2k}\notag\\
&\leq&
\|f_n\|_q^3\left\|
(f_n\stackrel{\frac{q}2+k-r}{\frown} f_n)\stackrel{r}{\frown} f_n
\right\|_{2q-2k}\notag\\
&=& \left\|
(f_n\stackrel{\frac{q}2+k-r}{\frown} f_n)\stackrel{r}{\frown} f_n
\right\|_{2q-2k}
= \left\|
(f_n\stackrel{s}{\frown} f_n)\stackrel{\frac{3q}2-l-s}{\frown} f_n
\right\|_{2l},\notag\\
\label{tendverszero2}
\end{eqnarray}
with $l=q-k\in\left\{\frac{q}2+1,\ldots,q\right\}$ and $s=\frac{3q}2-l-r\in\mathcal{B}_{2l}$.
Hence, (\ref{tendverszero1}) with (\ref{tendverszero2}) imply together that
\begin{equation}\label{tendverszero4}
(f_n\stackrel{r}{\frown} f_n) \stackrel{\frac{3q}2-k-r}{\frown} f_n\to 0,\quad k\in\left\{0,\ldots,\frac{q}2-1\right\},\,r\in\mathcal{B}_{2k},
\end{equation}
and conclude the proof of Proposition \ref{prop-main}, see indeed (\ref{tendverszero1}), (\ref{tendverszero3}) and (\ref{tendverszero4}).

\fin

The proof when $q\geq 3$ is {\it odd} is similar. In this situation, we set $p:=q-1$ and,
for $k\in \{0,\ldots, \frac{3p}{2}\}$, we denote by $\mathcal{B}_{2k+1}$ the set of those integers $r\in \{0,\ldots q\}$ for
which the double contraction $(f_n \pt{r} f_n) \pt{\frac{3p}{2}+1-k-r} f_n$ is well-defined (as an element of
$L^2(\R_+^{2k+1})$). The desired conclusion can then be reformulated in the following way:
\begin{prop}\label{prop-main-2}
Assume $E[I_q(f_n)^6]\to 8.25$ and $E[I_q(f_n)^4]\to 2.5$ as $n\to\infty$.
Then, as $n\to\infty$,
\begin{enumerate}
\item[$(a)$] $(f_n\stackrel{r}{\frown} f_n) \stackrel{\frac{3p}2+1-k-r}{\frown} f_n\to 0,\quad
k\in\left\{0,\ldots,\frac{3p}2\right\}\backslash \{\frac{p}{2}\},\, r\in\mathcal{B}_{2k+1}\backslash \{0,\frac{3p}2+1-k\}$;
\item[$(b)$] $-\frac12 f_n + \sum_{r=1}^{q-1} (f_n \stackrel{r}{\frown} f_n)  \stackrel{q-r}{\frown} f_n\to 0$.
\end{enumerate}
\end{prop}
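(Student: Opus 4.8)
The plan is to reproduce, step for step, the argument just given for even $q$, the only structural difference being that now every Wigner integral occurring in $I_q(f_n)^3$ has \emph{odd} order. Applying the multiplication formula (\ref{e:mult}) twice and grouping according to the order $2k+1$ of the resulting integral (so that $2k+1 = 3q - 2r - 2s$, i.e.\ $s = \frac{3p}2 + 1 - k - r$ with $p = q-1$), one obtains
\[
I_q(f_n)^3 = \sum_{k=0}^{\frac{3p}2+1} I_{2k+1}\!\left(\sum_{r\in\mathcal{B}_{2k+1}} (f_n\stackrel{r}{\frown} f_n)\stackrel{\frac{3p}2+1-k-r}{\frown} f_n\right).
\]
The level $k = \frac p2$ is the unique one producing an integral of order $q$; exactly as before, its kernel equals $2f_n + \sum_{r=1}^{q-1}(f_n\stackrel{r}{\frown} f_n)\stackrel{q-r}{\frown} f_n$, the two extra copies of $f_n$ arising from the boundary orders $r=0$ and $r=q$ (each reducing to $f_n$ because $f_n\stackrel{q}{\frown} f_n = \|f_n\|_q^2 = 1$). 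Subtracting $\tfrac52 I_q(f_n)$ therefore isolates precisely the kernel $-\tfrac12 f_n + \sum_{r=1}^{q-1}(f_n\stackrel{r}{\frown} f_n)\stackrel{q-r}{\frown} f_n$ of part $(b)$.

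Next I would expand $E\big[(I_q(f_n)^3 - \tfrac52 I_q(f_n))^2\big]$ using the isometry (\ref{e:freeisometry}). This yields the constant $1$ (from the top level $I_{3q}(f_n\otimes f_n\otimes f_n)$, whose kernel has norm $\|f_n\|_q^6 = 1$), the squared norm of the part-$(b)$ kernel, and sums of squared norms of the ``sum-then-square'' blocks at the remaining odd levels. An odd-order analogue of Lemma \ref{lm1}, proved by the same reflection and placed in the appendix, rewrites the blocks above the centre ($k > \frac p2$) as honest sums of squared norms plus twice the blocks below the centre ($k < \frac p2$). At each upper level the two boundary terms $r=0$ and $r=\frac{3p}2+1-k$ reduce to a norm $\|f_n\stackrel{r'}{\frown} f_n\|^2$ (via $(f_n\otimes f_n)\stackrel{s}{\frown} f_n = f_n\otimes(f_n\stackrel{s}{\frown} f_n)$ and $(f_n\stackrel{r'}{\frown} f_n)\otimes f_n$), and summing over $k$ they contribute exactly $2\sum_{r=1}^{q-1}\|f_n\stackrel{r}{\frown} f_n\|^2_{2q-2r}$. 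Combined with the elementary identity $E[I_q(f_n)^4] = 2 + \sum_{r=1}^{q-1}\|f_n\stackrel{r}{\frown} f_n\|^2_{2q-2r}$, these cancel the fourth-moment contribution, and the hypotheses $E[I_q(f_n)^6]\to 8.25$, $E[I_q(f_n)^4]\to 2.5$ force the whole expression to tend to $0$. Being a sum of nonnegative squared norms, every summand vanishes, which gives part $(b)$ and part $(a)$ for $k\in\{\frac p2+1,\ldots,\frac{3p}2\}$.

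It remains to recover the lower half $k\in\{0,\ldots,\frac p2-1\}$, which I would do by the reflection of (\ref{tendverszero2}): invoking the third point of Proposition \ref{contr-link}, each $\|(f_n\stackrel{r}{\frown} f_n)\stackrel{\frac{3p}2+1-k-r}{\frown} f_n\|^2_{2k+1}$ is written as an inner product of kernels living at the reflected level $2(p-k)+1$, whence Cauchy--Schwarz together with $\|f_n\|_q^3 = 1$ bounds it by a norm at level $l = p-k\in\{\frac p2+1,\ldots,p\}$ already shown to vanish. I expect the principal obstacle to be exactly this reflection bookkeeping: one must track the contraction orders through the adjoint identities of Proposition \ref{contr-link}, keeping careful account of the extra ``$+1$'' coming from the odd dimension, in order to verify that $k\mapsto p-k$ lands inside the already-controlled upper range and that every intermediate contraction remains well-defined (i.e.\ that the transformed orders lie in the relevant $\mathcal{B}$). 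Once this is checked, parts $(a)$ and $(b)$ follow just as in Proposition \ref{prop-main}.
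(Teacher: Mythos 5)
Your proposal is correct and follows essentially the same route as the paper, which itself only sketches the odd case by stating the analogue of Lemma \ref{lm1} (Lemma \ref{lm1-1}) and the corresponding expansion of $E\big[(I_q(f_n)^3-\tfrac52 I_q(f_n))^2\big]$ before declaring the rest identical to the even case. Your bookkeeping (the level $k=\tfrac p2$ producing the order-$q$ kernel, the boundary terms summing to $2\sum_{r=1}^{q-1}\|f_n\stackrel{r}{\frown}f_n\|^2_{2q-2r}$, and the reflection $k\mapsto p-k$ via Proposition \ref{contr-link}(3) landing in the already-controlled upper range) matches the paper's argument exactly.
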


With the same arguments as in the proof of Lemma \ref{lm1}, we get the following analogous identity
as a starting point towards the proof of Proposition \ref{prop-main-2}:
\begin{lemma}\label{lm1-1}
We have
\begin{eqnarray*}
&&\sum_{k=\frac{p}2+1}^{\frac{3p}2} \left\|
\sum_{r\in\mathcal{B}_{2k+1}} (f_n\stackrel{r}{\frown} f_n) \stackrel{\frac{3p}2+1-k-r}{\frown} f_n
\right\|^2 \\
&=&\sum_{k=\frac{p}2+1}^{\frac{3p}2}
\sum_{r\in\mathcal{B}_{2k+1}} \left\|(f_n\stackrel{r}{\frown} f_n) \stackrel{\frac{3p}2+1-k-r}{\frown} f_n
\right\|^2
+2\sum_{k=0}^{\frac{p}2-1} \left\|
\sum_{r\in\mathcal{B}_{2k+1}} (f_n\stackrel{r}{\frown} f_n) \stackrel{\frac{3p}2+1-k-r}{\frown} f_n
\right\|^2.
\end{eqnarray*}
\end{lemma}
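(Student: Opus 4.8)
The plan is to transcribe the argument proving Lemma \ref{lm1}, now with $p=q-1$ in place of $q$, chaos order $2k+1$ in place of $2k$, and central value $\frac p2$ in place of $\frac q2$; since $q$ is odd, $p$ is even, so every parity convention used in the even case remains valid. Writing
\[
A_{k,r}:=(f_n\stackrel{r}{\frown} f_n)\stackrel{\frac{3p}2+1-k-r}{\frown} f_n,\qquad G_{2k+1}:=\sum_{r\in\mathcal{B}_{2k+1}}A_{k,r},
\]
I first expand each squared norm on the left as $\|G_{2k+1}\|^2=\sum_{r\in\mathcal{B}_{2k+1}}\|A_{k,r}\|^2+\sum_{r\neq r'}\langle A_{k,r},A_{k,r'}\rangle$. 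The diagonal contributions appear identically on both sides and cancel, so the asserted identity is equivalent to
\[
\sum_{k=\frac p2+1}^{\frac{3p}2}\ \sum_{\substack{r,r'\in\mathcal{B}_{2k+1}\\ r\neq r'}}\langle A_{k,r},A_{k,r'}\rangle\ =\ 2\sum_{k=0}^{\frac p2-1}\|G_{2k+1}\|^2 .
\]
Everything thus reduces to proving that the off-diagonal cross terms collected from the \emph{upper} range $k\in\{\frac p2+1,\ldots,\frac{3p}2\}$ reassemble into exactly twice the \emph{full} squared norms collected from the \emph{lower} range $k\in\{0,\ldots,\frac p2-1\}$.

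The tool for this is the family of contraction-rearrangement identities gathered in Proposition \ref{contr-link}, which are stated independently of the parity of $q$ and rest only on the mirror symmetry $f_n=f_n^*$ and on the associativity of iterated contractions; they therefore apply here without modification. Each scalar $\langle A_{k,r},A_{k,r'}\rangle$ is a complete contraction of a closed diagram built from six copies of $f_n$, in which the $2k+1$ free legs of $A_{k,r}$ are paired with those of $A_{k,r'}$. Using the rearrangement identities one severs this diagram along a different internal seam and re-expresses the same scalar as an inner product of two double contractions whose common number of free legs is odd and indexed by the lower range; the point is that, as $(k,r,r')$ runs over the upper off-diagonal terms, these re-expressed scalars exhaust precisely the full expansion of $\sum_{k'}\|G_{2k'+1}\|^2$ over the lower range, each lower contribution being produced twice. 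This doubling is attributable to the symmetry $\langle A_{k,r},A_{k,r'}\rangle=\langle A_{k,r'},A_{k,r}\rangle$, and is exactly the mechanism that produces the factor $2$ in the even case.

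I expect the only genuine difficulty to be combinatorial rather than analytic. One must check that the re-cutting preserves the admissibility constraints defining the sets $\mathcal{B}_{2k+1}$ and, most delicately, that the resulting correspondence maps the upper range $\{\frac p2+1,\ldots,\frac{3p}2\}$ \emph{onto} the lower range $\{0,\ldots,\frac p2-1\}$ with the correct multiplicity, without leaking into the central level $k=\frac p2$ (handled separately in part (b) of Proposition \ref{prop-main-2}) or into the top tensor level corresponding to $f_n\otimes f_n\otimes f_n$. In the odd case this forces one to propagate the various shifts by $+1$ in the exponents and in the parity of the chaos order with care. No analytic input beyond the isometry (\ref{e:freeisometry}) and the Cauchy--Schwarz inequality enters, so once this index dictionary is established the identity follows by summation exactly as for Lemma \ref{lm1}.
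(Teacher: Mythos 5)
Your proposal takes essentially the same route as the paper, which offers no separate argument for this lemma beyond the assertion that it follows ``with the same arguments as in the proof of Lemma \ref{lm1}'': namely, expand the squared norms, cancel the diagonal terms, and use the parity-free rearrangement identities of Proposition \ref{contr-link} to fold the upper-range cross terms onto twice the lower-range squared norms. Your attribution of the factor $2$ solely to the symmetry of the inner product is slightly loose (in the even-case proof it arises from recombining the transported off-diagonal terms with the transported diagonal ones, which together reconstitute a second copy of $\|\sum_r A_{k,r}\|^2$), but this does not affect the soundness of the strategy.
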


Since we have
\begin{eqnarray*}
\lefteqn{E\big[(I_q(f_n)^3 -\frac52 I_q(f_n))^2\big]=}\\
&& \sum_{k=0}^{\frac{p}2-1} \left\|\sum_{r\in\mathcal{B}_{2k+1}}(f_n\stackrel{r}{\frown} f_n)\stackrel{\frac{3p}2+1-k-r}{\frown} f_n\right\|^2
+\left\|-\frac12f_n+\sum_{r=1}^{q-1}(f_n \stackrel{r}{\frown} f_n)  \stackrel{q-r}{\frown} f_n \right\|^2\\
&& +\sum_{k=\frac{p}2+1}^{\frac{3p}{2}} \left\|\sum_{r\in\mathcal{B}_{2k}}(f_n\stackrel{r}{\frown} f_n)\stackrel{\frac{3p}2+1-k-r}{\frown} f_n\right\|^2
+ 1,
\end{eqnarray*}
the conclusion is now easily derived by means of the same arguments as in the even case.

\subsection{Proof of Theorem \ref{mainthm}, $(ii) \Longrightarrow (iii)$}\label{subsec:implic-2-3}

The proof of $(ii) \Rightarrow (iii)$ will make use, among other things, of the following readily-checked identity:
\begin{lemma}\label{lem:forbid-walks}
Let $q\geq 2$ be an integer and $f\in L^2(\R_+^q)$. If $r\in \{1,\ldots, q-1\}$ and $r' \in \{1,\ldots,q\}$
are such that $r'+2r >2q$ then, for every integer $p\geq 1$ and every function $g\in L^2(\R_+^{p})$,
$$((g \otimes f) \pt{r} f) \pt{r'} f =g \pt{r'+2r-2q} ((f \pt{r} f) \pt{2q-2r} f).$$
In particular, for any sequence $(f_n)$ in $L^2(\R_+^q)$ satisfying Condition
(\ref{contract-1}) of Theorem \ref{mainthm}, one has $((g\otimes f_n) \pt{r} f_n) \pt{r'} f_n \approx 0$ (remember that the notation $\approx$ has been introduced at the beginning of the section).
\end{lemma}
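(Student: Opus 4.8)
The plan is to prove the displayed identity by expanding each contraction according to the integral definition (\ref{contr}) and matching the two sides after a relabelling of the integration variables; the convergence statement then follows by combining the identity with the Cauchy--Schwarz bound $\|h\pt{a}k\|\le\|h\|\,\|k\|$ for contractions (which itself is immediate from (\ref{contr}) by Cauchy--Schwarz in the contracted variables).

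First I would record an elementary localization fact. Since $r\le q-1<q$, the last $r$ arguments of $g\otimes f\in L^2(\R_+^{p+q})$ all lie in the $f$-factor, so the contraction with $f$ never reaches $g$; writing out (\ref{contr}) then gives $(g\otimes f)\pt{r}f=g\otimes(f\pt{r}f)$, an element of $L^2(\R_+^{p+2q-2r})$. I set $\phi:=f\pt{r}f\in L^2(\R_+^{m})$ with $m:=2q-2r$. The heart of the argument is the following ``overshoot'' identity, valid whenever $m<r'\le q$ (and $r'-m\le p$, so that both sides are defined): with $b:=r'-m$,
\[
(g\otimes\phi)\pt{r'}f=g\pt{b}(\phi\pt{m}f).
\]
To prove it I would expand the left-hand side: because $m<r'$, the last $r'$ arguments of $g\otimes\phi$ consist of all $m$ arguments of $\phi$ together with the last $b$ arguments of $g$, so the contraction integrates $\phi$ out entirely and spills over by $b$ variables into $g$, the reversal in (\ref{contr}) sending the $\phi$-block and the $g$-block to consecutive reversed blocks among the arguments of $f$. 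On the right-hand side, $\phi\pt{m}f$ is the full contraction of $\phi$ against the first $m$ arguments of $f$, and the outer $g\pt{b}(\cdot)$ integrates the last $b$ arguments of $g$. A direct comparison shows that, under the identification $x_i\leftrightarrow w_i$ ($1\le i\le b$) and $x_{b+j}\leftrightarrow y_j$ ($1\le j\le m$) of the dummy variables, both sides equal
\[
\int g(t_1,\ldots,t_{p-b},w_1,\ldots,w_b)\,\phi(y_1,\ldots,y_m)\,f(y_m,\ldots,y_1,w_b,\ldots,w_1,s_1,\ldots,s_{q-r'})\,dy\,dw .
\]
Applying this with $m=2q-2r$, so that $b=r'+2r-2q$ and $\phi\pt{m}f=(f\pt{r}f)\pt{2q-2r}f$, and combining with the localization fact yields exactly the claimed identity.

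For the ``in particular'' part I would check that the pair $(r,2q-2r)$ meets the hypotheses of Condition (\ref{contract-1}). From $r'\le q$ and $r'+2r>2q$ one gets $2r>q$, i.e.\ $r>q/2$, whence $2q-2r\in\{1,\ldots,q-1\}$; moreover $(2q-2r)+2r=2q\le 2q$ and $r+(2q-2r)=2q-r\ne q$. Hence (\ref{contract-1}) forces $(f_n\pt{r}f_n)\pt{2q-2r}f_n\to 0$, and the contraction bound $\|g\pt{r'+2r-2q}\psi\|\le\|g\|_p\,\|\psi\|$ applied to $\psi=(f_n\pt{r}f_n)\pt{2q-2r}f_n$ gives $((g\otimes f_n)\pt{r}f_n)\pt{r'}f_n\approx 0$.

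The hard part will be the bookkeeping in the overshoot identity: because the free contraction (\ref{contr}) \emph{reverses} the contracted arguments, one must track precisely which block of dummy variables lands in which reversed position in $f$, and one must verify that the nested contractions on the right-hand side are all well-defined. The latter reduces to the inequality $\dim\big((f\pt{r}f)\pt{2q-2r}f\big)=2r-q\ge 1$, equivalently $r>q/2$ — which, as noted above, is guaranteed precisely by the standing hypothesis $r'+2r>2q$ together with $r'\le q$.
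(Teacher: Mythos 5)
Your proof is correct, and it supplies precisely the verification that the paper omits: the authors label this a ``readily-checked identity'' and give no argument, so the intended proof is exactly your direct expansion of the contractions via (\ref{contr}) with careful tracking of the reversed blocks. Your localization step $(g\otimes f)\pt{r}f=g\otimes(f\pt{r}f)$ (valid since $r<q$), the overshoot identity with $b=r'+2r-2q$, and the check that the pair $(r,2q-2r)$ falls under Condition (\ref{contract-1}) (using $r>q/2$ and $2q-r\neq q$) are all accurate.
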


\begin{remark}\label{rk:forbid-walks}
{\rm
The previous result should be understood as follows: if the double contraction $(r,r')$ of $g\otimes f_n$ interacts with the arguments of $g$, that is, if $((g\otimes f_n) \pt{r} f_n) \pt{r'} f_n \neq g \otimes ((f_n \pt{r} f_n) \pt{r'} f_n)$, then it tends to $0$
(see also Figure 3).
}
\end{remark}

\begin{center}
\begin{figure}[!ht]\label{demo-q-4}

\begin{pspicture}(0,0)(15,10)
\psline(-0.5,0)(15,0)

\psline(0,-0.1)(0,0.1) \psline(1.5,-0.1)(1.5,0.1) \psline(3,-0.1)(3,0.1) \psline(4.5,-0.1)(4.5,0.1) \psline(6,-0.1)(6,0.1) \psline(7.5,-0.1)(7.5,0.1) \psline(9,-0.1)(9,0.1) \psline(10.5,-0.1)(10.5,0.1) \psline(12,-0.1)(12,0.1) \psline(13.5,-0.1)(13.5,0.1)

\psline(0,2)(1.5,4) \psline(0,2)(1.5,0) \psline(0,2)(1.5,1) \psline(0,2)(1.5,2) \psline(0,2)(1.5,3)
\psline(1.5,4)(3,2) \psline(1.5,4)(3,3) \psline(1.5,4)(3,4) \psline(1.5,4)(3,5) \psline[linestyle=dotted](1.5,4)(3,6)
\psline(3,5)(4.5,4) \psline(3,4)(4.5,4) \psline(3,3)(4.5,4) \psline[linestyle=dotted](3,5)(4.5,7) \psline[linestyle=dotted](3,4)(4.5,6) \psline[linestyle=dotted](3,3)(4.5,5)
\psline(4.5,4)(6,5) \psline(4.5,4)(6,4) \psline(4.5,4)(6,3) \psline(4.5,4)(6,2) \psline[linestyle=dotted](4.5,4)(6,6)
\psline(6,5)(7.5,4) \psline(6,4)(7.5,4) \psline(6,3)(7.5,4) \psline[linestyle=dotted](6,5)(7.5,7) \psline[linestyle=dotted](6,4)(7.5,6) \psline[linestyle=dotted](6,3)(7.5,5)
\psline(7.5,4)(9,2)

\psline[linecolor=red](3,3)(4.5,2)  \psline[linecolor=red](3,3)(4.5,1)
\psline[linecolor=red](6,3)(7.5,2)  \psline[linecolor=red](6,3)(7.5,1)

\psline(1.5,0)(3,2) \psline[linestyle=dotted](1.5,1)(2,1.5) \psline[linestyle=dotted](1.5,1)(2,1.25) \psline[linestyle=dotted](1.5,2)(2,2) \psline[linestyle=dotted](1.5,2)(2,2.5) \psline[linestyle=dotted](1.5,3)(2,2.75) \psline[linestyle=dotted](1.5,3)(2,3.5)

\psline[linestyle=dotted](3,2)(3.5,2.5) \psline[linestyle=dotted](3,2)(3.5,2.25) \psline[linestyle=dotted](3,2)(3.5,2) \psline[linestyle=dotted](3,2)(3.5,1.75) \psline[linestyle=dotted](3,2)(3.5,1.5)

\psline[linestyle=dotted](6,2)(6.5,2.5) \psline[linestyle=dotted](6,2)(6.5,2.25) \psline[linestyle=dotted](6,2)(6.5,2) \psline[linestyle=dotted](6,2)(6.5,1.75) \psline[linestyle=dotted](6,2)(6.5,1.5)

\psline[linestyle=dotted](9,2)(9.5,2.5) \psline[linestyle=dotted](9,2)(9.5,2.25) \psline[linestyle=dotted](9,2)(9.5,2) \psline[linestyle=dotted](9,2)(9.5,1.75) \psline[linestyle=dotted](9,2)(9.5,1.5)

\psline[linestyle=dotted](7.5,4)(8,4.5) \psline[linestyle=dotted](7.5,4)(8,4.25) \psline[linestyle=dotted](7.5,4)(8,4) \psline[linestyle=dotted](7.5,4)(8,3.75)

\rput(-0.3,2){$f_n$}

\rput(0,-0.3){\tiny{$0$}} \rput(3,-0.3){\tiny{$2$}} \rput(6,-0.3){\tiny{$4$}} \rput(9,-0.3){\tiny{$6$}}
\end{pspicture}

\

\caption{($q=4$) According to Lemma \ref{lem:forbid-walks}, the sum of (the iterated contractions represented by) those walks which run on a red line tends to $0$.}
\end{figure}
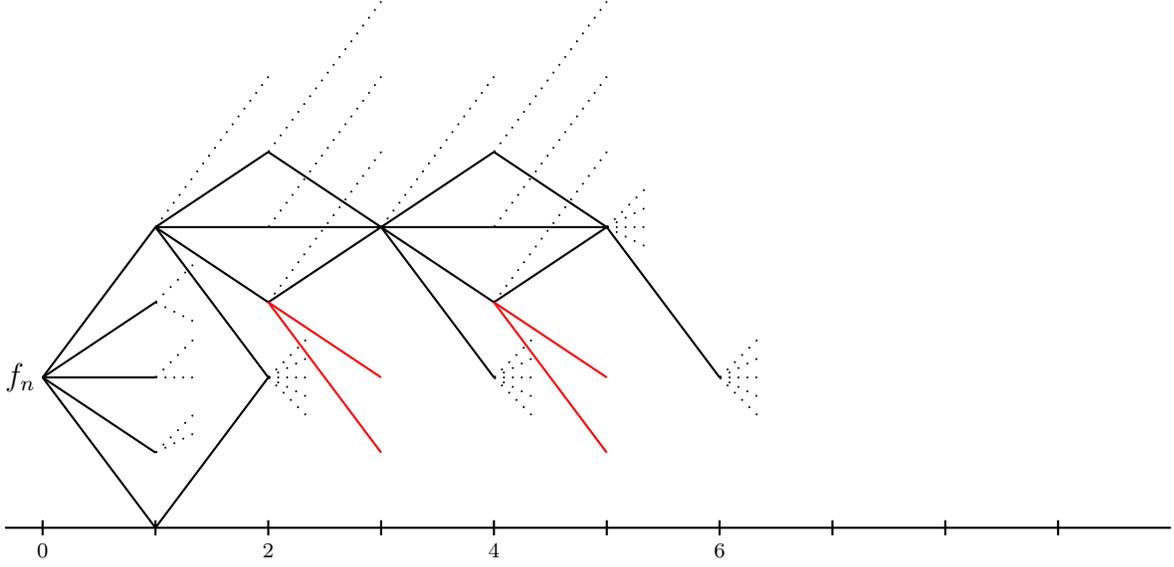
\end{center}

Now, fix an integer $q\geq 2$ and consider a sequence $(f_n)$ in $L^2(\R_+^q)$ such that both
$\|f_n \|_q=1$ and conditions
(\ref{contract-1}) and (\ref{contract-2}) of Theorem \ref{mainthm} are satisfied. To prove that $I_q(f_n)$ converges to the
tetilla law, we need to establish that the sequence $(E\lc I_q(f_n)^l\rc)_{l\geq 2}=(S_{f_n,l}(f_n))_{l\geq 2}$ of its
moments is asymptotically (with respect to $n$) governed by the algorithm of Proposition \ref{prop:algo-moments}.
(We recall that the general notation $S_{f,l}(g)$ has been introduced in Subsection \ref{subsec:moments-tetilla}.)

Let us first consider the even moments, and fix $l=2m \geq 2$. Since $\|f_n\|_q=1$, we have, as in (\ref{alg-1}),
\begin{eqnarray}
S_{f_n,2m}(f_n) &=& S_{f_n,2m-1}(1)+\sum_{r=1}^{q-1} S_{f_n,2m-1}(f_n \pt{r} f_n)+S_{f_n,2m-1}(f_n \otimes f_n) \nonumber\\
&=& S_{f_n,2m-2}(f_n)+\sum_{r=1}^{q-1} S_{f_n,2m-1}(f_n \pt{r} f_n)+S_{f_n,2m-1}(f_n \otimes f_n).
\end{eqnarray}
Then, due to the conditions (\ref{contract-1}) and (\ref{contract-2}) of Theorem \ref{mainthm}, we successively deduce
\begin{eqnarray}
\sum_{r=1}^{q-1} S_{f_n,2m-1}(f_n \pt{r} f_n) &\approx & \sum_{r=1}^{q-1} S_{f_n,2m-2}((f_n \pt{r} f_n) \pt{q-r} f_n)+\sum_{r=1}^{q-1} S_{f_n,2m-2}((f_n \pt{r} f_n) \otimes f_n) \nonumber\\
&\approx & \frac{1}{2} S_{f_n,2m-2}(f_n)+\sum_{r=1}^{q-1} S_{f_n,2m-2}((f_n \pt{r} f_n) \otimes f_n).
\end{eqnarray}
We are thus left with the computation,  for all $r\in \{1,\ldots,q-1\}$, of the sums $S_{f_n,2m-1}(f_n \otimes f_n)$ and
$S_{f_n,2m-2}((f_n \pt{r} f_n) \otimes f_n)$. To this end, notice that, thanks to the result of Lemma \ref{lem:forbid-walks}
(see also Remark \ref{rk:forbid-walks}), the splitting argument used in Subsection \ref{subsec:moments-tetilla} (see Figure 2)
can be applied in this situation as well, as the three figures 3, 4 and 5 illustrate it.
Note in particular that the splitting no longer
applies to each walk individually, but to a union of well-chosen walks (that is, a sum of well-chosen iterated contractions) that
puts forward the pattern $\sum_{r=1}^{q-1}(f_n \pt{r} f_n) \pt{q-r} f_n$, instead of $(f \pt{1} f) \pt{1} f$ as in the
two variables case.

\begin{center}
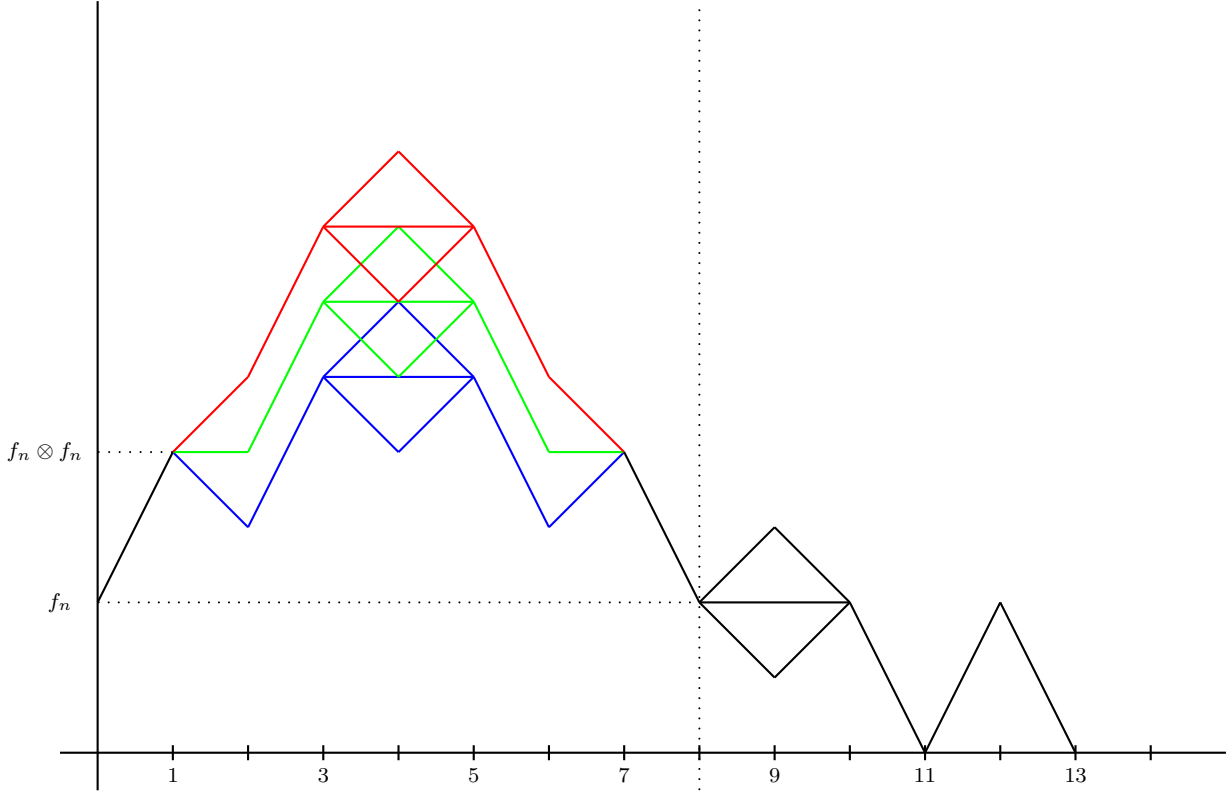
\begin{figure}[!ht]

\begin{pspicture}(0,0)(15,10)
\psline(-0.5,0)(15,0)


 \psline(1,-0.1)(1,0.1) \psline(2,-0.1)(2,0.1) \psline(3,-0.1)(3,0.1) \psline(4,-0.1)(4,0.1) \psline(5,-0.1)(5,0.1) \psline(6,-0.1)(6,0.1) \psline(7,-0.1)(7,0.1) \psline(8,-0.1)(8,0.1) \psline(9,-0.1)(9,0.1) \psline(10,-0.1)(10,0.1) \psline(11,-0.1)(11,0.1) \psline(12,-0.1)(12,0.1) \psline(13,-0.1)(13,0.1) \psline(14,-0.1)(14,0.1)

\psline(0,-0.5)(0,10)

\psline(0,2)(1,4)

\psline[linecolor=blue](1,4)(2,3) \psline[linecolor=blue](2,3)(3,5) \psline[linecolor=blue](3,5)(4,4) \psline[linecolor=blue](4,4)(5,5) \psline[linecolor=blue](3,5)(5,5) \psline[linecolor=blue](3,5)(4,6) \psline[linecolor=blue](4,6)(5,5) \psline[linecolor=blue](5,5)(6,3) \psline[linecolor=blue](6,3)(7,4)

\psline[linecolor=green](1,4)(2,4) \psline[linecolor=green](2,4)(3,6) \psline[linecolor=green](3,6)(4,5) \psline[linecolor=green](4,5)(5,6) \psline[linecolor=green](3,6)(5,6) \psline[linecolor=green](3,6)(4,7) \psline[linecolor=green](4,7)(5,6) \psline[linecolor=green](5,6)(6,4) \psline[linecolor=green](6,4)(7,4)

\psline[linecolor=red](1,4)(2,5) \psline[linecolor=red](2,5)(3,7) \psline[linecolor=red](3,7)(4,6) \psline[linecolor=red](4,6)(5,7) \psline[linecolor=red](3,7)(5,7) \psline[linecolor=red](3,7)(4,8)\psline[linecolor=red](4,8)(5,7) \psline[linecolor=red](5,7)(6,5) \psline[linecolor=red](6,5)(7,4)

\psline(7,4)(8,2)

\psline[linestyle=dotted](8,-0.5)(8,10) \psline[linestyle=dotted](0,2)(8,2) \psline[linestyle=dotted](0,4)(1,4)

\psline(8,2)(9,1) \psline(9,1)(10,2) \psline(8,2)(10,2) \psline(8,2)(9,3) \psline(9,3)(10,2)

\psline(10,2)(11,0) \psline(11,0)(12,2) \psline(12,2)(13,0)

\rput(-0.7,4){\tiny{$f_n\otimes f_n$}} \rput(-0.5,2){\tiny{$f_n$}}

\rput(1,-0.3){\tiny{$1$}} \rput(3,-0.3){\tiny{$3$}} \rput(5,-0.3){\tiny{$5$}} \rput(7,-0.3){\tiny{$7$}} \rput(9,-0.3){\tiny{$9$}} \rput(11,-0.3){\tiny{$11$}} \rput(13,-0.3){\tiny{$13$}}
\end{pspicture}

\

\caption{($q=4$) A splitting (at time $8$) in $\ca_{4,8,13}$. Observe in particular how the blue, green and red (joint) walks meet together at time $7$ so as to retrieve the pattern $(f_n\pt{1} f_n) \pt{3} f_n+(f_n \pt{2} f_n) \pt{2} f_n+(f_n \pt{3} f_n) \pt{1} f_n$ that begins at time $1$.}
\end{figure}
\end{center}

\begin{center}
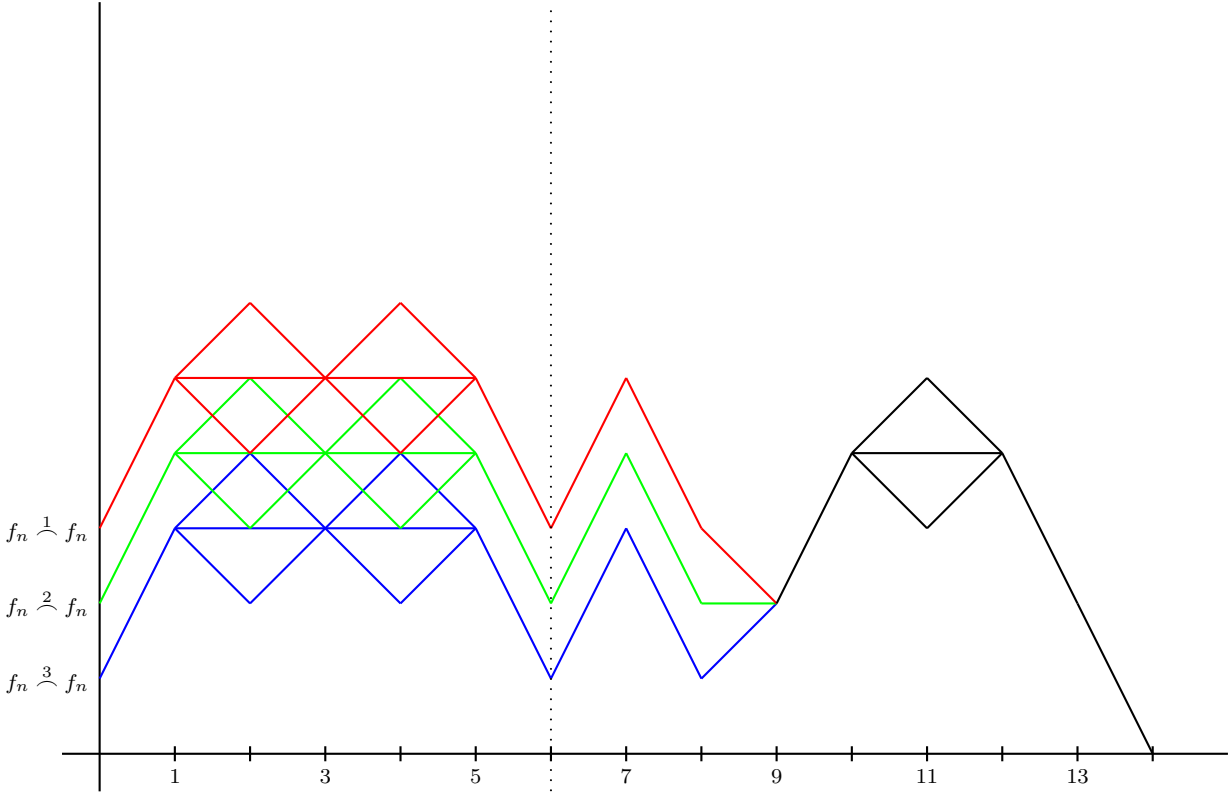
\begin{figure}[!ht]

\begin{pspicture}(0,0)(15,10)
\psline(-0.5,0)(15,0)


 \psline(1,-0.1)(1,0.1) \psline(2,-0.1)(2,0.1) \psline(3,-0.1)(3,0.1) \psline(4,-0.1)(4,0.1) \psline(5,-0.1)(5,0.1) \psline(6,-0.1)(6,0.1) \psline(7,-0.1)(7,0.1) \psline(8,-0.1)(8,0.1) \psline(9,-0.1)(9,0.1) \psline(10,-0.1)(10,0.1) \psline(11,-0.1)(11,0.1) \psline(12,-0.1)(12,0.1) \psline(13,-0.1)(13,0.1) \psline(14,-0.1)(14,0.1)

\psline(0,-0.5)(0,10)

\psline[linecolor=blue](0,1)(1,3) \psline[linecolor=blue](1,3)(2,2) \psline[linecolor=blue](2,2)(3,3) \psline[linecolor=blue](1,3)(3,3) \psline[linecolor=blue](1,3)(2,4) \psline[linecolor=blue](2,4)(3,3)  \psline[linecolor=blue](3,3)(4,2) \psline[linecolor=blue](4,2)(5,3) \psline[linecolor=blue](3,3)(5,3) \psline[linecolor=blue](3,3)(4,4) \psline[linecolor=blue](4,4)(5,3) \psline[linecolor=blue](5,3)(6,1) \psline[linecolor=blue](6,1)(7,3) \psline[linecolor=blue](7,3)(8,1) \psline[linecolor=blue](8,1)(9,2)

\psline[linecolor=green](0,2)(1,4) \psline[linecolor=green](1,4)(2,3) \psline[linecolor=green](2,3)(3,4) \psline[linecolor=green](1,4)(3,4) \psline[linecolor=green](1,4)(2,5) \psline[linecolor=green](2,5)(3,4)  \psline[linecolor=green](3,4)(4,3) \psline[linecolor=green](4,3)(5,4) \psline[linecolor=green](3,4)(5,4) \psline[linecolor=green](3,4)(4,5) \psline[linecolor=green](4,5)(5,4) \psline[linecolor=green](5,4)(6,2) \psline[linecolor=green](6,2)(7,4) \psline[linecolor=green](7,4)(8,2) \psline[linecolor=green](8,2)(9,2)

\psline[linecolor=red](0,3)(1,5) \psline[linecolor=red](1,5)(2,4) \psline[linecolor=red](2,4)(3,5) \psline[linecolor=red](1,5)(3,5) \psline[linecolor=red](1,5)(2,6) \psline[linecolor=red](2,6)(3,5)  \psline[linecolor=red](3,5)(4,4) \psline[linecolor=red](4,4)(5,5) \psline[linecolor=red](3,5)(5,5) \psline[linecolor=red](3,5)(4,6) \psline[linecolor=red](4,6)(5,5) \psline[linecolor=red](5,5)(6,3) \psline[linecolor=red](6,3)(7,5) \psline[linecolor=red](7,5)(8,3) \psline[linecolor=red](8,3)(9,2)

\psline[linestyle=dotted](6,-0.5)(6,10)

\psline(9,2)(10,4) \psline(10,4)(11,3) \psline(11,3)(12,4) \psline(10,4)(12,4) \psline(10,4)(11,5) \psline(11,5)(12,4) \psline(12,4)(13,2) \psline(13,2)(14,0)

\rput(-0.7,1){\tiny{$f_n \pt{3} f_n$}} \rput(-0.7,2){\tiny{$f_n \pt{2} f_n$}} \rput(-0.7,3){\tiny{$f_n \pt{1} f_n$}}

\rput(1,-0.3){\tiny{$1$}} \rput(3,-0.3){\tiny{$3$}} \rput(5,-0.3){\tiny{$5$}} \rput(7,-0.3){\tiny{$7$}} \rput(9,-0.3){\tiny{$9$}} \rput(11,-0.3){\tiny{$11$}} \rput(13,-0.3){\tiny{$13$}}
\end{pspicture}

\

\caption{($q=4$) Three splittings (at time $6$) in $\ca_{4,2,15}$ (blue), $\ca_{4,4,15}$ (green) and $\ca_{4,6,15}$ (red), that merge in the second part.}
\end{figure}
\end{center}

\noindent
With this representation in mind, we (asymptotically) recover the formula
\begin{equation}
S_{f_n,2m-1}(f_n \otimes f_n) \approx \sum_{k=1}^{m-1} S_{f_n,2k}^+(f_n) S_{f_n,2m-2k}(f_n),
\end{equation}
as well as
\begin{equation}
S_{f_n,2m-2}((f_n \pt{r} f_n) \otimes f_n) \approx \sum_{k=1}^{m-2} S_{f_n,2k}^+(f_n) S_{f_n,2m-2k-1}(f_n \pt{r} f_n),
\end{equation}
for every $r=1,\ldots,q-1$. (We recall that the notation $S^+_{f,l}(g)$ has been defined in
Subsection \ref{subsec:moments-tetilla}.) Consequently, by setting
$$N_{f_n,l}:=\sum_{r=1}^{q-1} S_{f_n,l}(f_n \pt{r} f_n),$$
we get the asymptotic algorithm
$$\left\lbrace
\begin{array}{rcl}
S_{f_n,2m}(f_n) &\approx & S_{f_n,2m-2}(f_n)+N_{f_n,2m-1}+\sum_{k=1}^{m-1} S^+_{f_n,2k}(f_n) S_{f_n,2m-2k}(f_n),\\
N_{f_n,2m-1}&\approx &\frac{1}{2} S_{f_n,2m-2}(f_n)+\sum_{k=1}^{m-2} S^+_{f_n,2k}(f_n) N_{f,2m-2k-1}.
\end{array}
\right.$$
Then, as in Subsection \ref{subsec:moments-tetilla}, it is not hard to see that the above arguments
apply to $S^+_{f_n,2k}(f_n)$ as well, thus giving
$$\left\lbrace
\begin{array}{rcl}
S^+_{f_n,2m}(f_n) &\approx & N^+_{f_n,2m-1}+\sum_{k=1}^{m-1} S^+_{f_n,2k}(f_n) S^+_{f_n,2m-2k}(f_n),\\
N^+_{f_n,2m-1}&\approx &\frac{1}{2} S^+_{f_n,2m-2}(f_n)+\sum_{k=1}^{m-2} S^+_{f_n,2k}(f_n) N^+_{f,2m-2k-1},
\end{array}
\right.$$
where we have naturally set $N^+_{f_n,l}:=\sum_{r=1}^{q-1} S^+_{f_n,l}(f_n \pt{r} f_n)$. Together with the initial conditions
$S_{f_n,2}(f_n)=S^+_{f_n,2}(f_n)=\| f_n\|_q^2=1$ and
$N_{f_n,3}=N_{f_n,3}^+ \approx \frac{1}{2}$,
we finally recognize the iterative procedure that governs the moments of the tetilla law
(see Proposition \ref{prop:algo-moments}), which concludes the proof.
\

As far as the odd moments are concerned, we can follow the same lines of reasoning  as above, and derive an analogous iterative formula.
However, due to the assumption (\ref{contract-1}) of Theorem \ref{mainthm}, the initial condition $S_{f_n,3}(f_n)$ of the resulting algorithm tends
to $0$, implying in turn that all the odd moments of $I_q(f_n)$ asymptotically vanish, as expected.

\subsection{Proof of Corollary \ref{notetilla}}

Assume that $F = I_q(f)$, where $f$ is a mirror symmetric element of $L^2(\R_+^q)$ such that $\|f\|^2_{q}=1$.
To achieve a contradiction, we suppose
that  $(f\stackrel{1}{\frown}f)\stackrel{1}{\frown} f$ is zero almost everywhere
(which would be the case if $F$ had the tetilla law, according to Theorem \ref{mainthm}). That is,
for almost all $a,b\in\R_+$ and ${\bf r}_{q-2},{\bf s}_{q-2},{\bf t}_{q-2}\in\R_+^{q-2}$, we have
\begin{equation}\label{bravoaurelien}
\int_{\R_+^2}f(a,{\bf r}_{q-2},u)f(u,{\bf s}_{q-2},v)f(v,{\bf t}_{q-2},b)dudv = 0.
\end{equation}
Now, consider an orthonormal basis $(e_n)_{n\in \N}$ of $L^2(\R_+)$ and, for every ${\bf i}_{l}=(i_1,\ldots , i_l) \in \N^l$ ($l\in \N$), set $e_{{\bf i}_l}=e_{i_1} \otimes \ldots \otimes e_{i_l}$. Let us also introduce, for every ${\bf i}_{q-2} \in \N^{q-2}$, the function
\[
g_{{\bf i}_{q-2}}(a,b)=\int_{\R_+^{q-2}} f(a,{\bf x}_{q-2},b)e_{{\bf i}_{q-2}}({\bf x}_{q-2})d{\bf x}_{q-2};\quad a,b\in\R_+.
\]
Using (\ref{bravoaurelien}), we immediately deduce that $(g_{{\bf i}_{q-2}}
\stackrel{1}{\frown} g_{{\bf i}_{q-2}} ) \stackrel{1}{\frown} g_{{\bf i}_{q-2}} = 0$ almost everywhere.
Hence $\|g_{{\bf i}_{q-2}}
\stackrel{1}{\frown} g_{{\bf i}_{q-2}}\|^2_2 = \langle (g_{{\bf i}_{q-2}}
\stackrel{1}{\frown} g_{{\bf i}_{q-2}} ) \stackrel{1}{\frown} g_{{\bf i}_{q-2}}, g_{{\bf i}_{q-2}} \rangle_2 = 0$ so that, for every $j\in \N$,
\[
\|g_{{\bf i}_{q-2}} \stackrel{1}{\frown} e_j\|^2_1 =  \langle g_{{\bf i}_{q-2}}
\stackrel{1}{\frown} g_{{\bf i}_{q-2}} , e_j\otimes e_j\rangle_2=0.\]
As a consequence, for all $j,k\in \N$, $\int_{\R_+^2} g_{{\bf i}_{q-2}}(a,b)e_j(a)e_k(b) \, dadb=0$. Otherwise stated, one has, for every ${\bf i}_q \in \N^q$, $\int_{\R_+^q} f( {\bf x}_q) e_{{\bf i}_q}({\bf x}_q) d {\bf x}_q=0$. This proves that $f=0$ a.e. and contradicts the normalization $\| f\|_q=1$.

\section{Appendix: Proof of Lemma \ref{lm1}}

During the proof of Theorem \ref{mainthm}, we made use of Lemma \ref{lm1}, as well as the following
Proposition \ref{contr-link}. This appendix is devoted to their respective proofs.

\begin{prop}\label{contr-link}
Let $q\geq 2$ be an integer, and let $f\in L^2(\R_+^q)$.
\begin{enumerate}
\item If $r,s,r',s'$ are four positive integers satisfying
$r+s=r'+s'\leq q$, $r<r'$ and $r'+s\geq q$, then
\[
\langle (f \stackrel{r}{\frown} f) \stackrel{s}{\frown} f, (f\stackrel{r'}{\frown}f)\stackrel{s'}{\frown}f\rangle_{3q-2r-2s}
=
\langle (f \stackrel{s}{\frown} f) \stackrel{2q-2s-r}{\frown} f, (f\stackrel{q-r'}{\frown}f)\stackrel{q-s'}{\frown}f\rangle_{2r+2s-q}.
\]
\item If $r,s,r',s'$ are four positive integers satisfying
$r+s=r'+s'\leq q$, $r<r'$ and $r'+s\leq q$, then
\[
\langle (f \stackrel{r}{\frown} f) \stackrel{s}{\frown} f, (f\stackrel{r'}{\frown}f)\stackrel{s'}{\frown}f\rangle_{3q-2r-2s}
=
\langle (f \stackrel{s}{\frown} f) \stackrel{q-s'}{\frown} f, (f\stackrel{q-r'}{\frown}f)\stackrel{2r'-r}{\frown}f\rangle_{q+2r-2r'}.
\]
\item If $r$ and $s$ are two positive integers satisfying $r\leq q$, $s\leq q\wedge(2q-2r)$ and
$r+s\geq q$, then
\[
\| (f \stackrel{r}{\frown} f) \stackrel{s}{\frown} f\|_{3q-2r-2s}^2
=
\langle (f \stackrel{q-r}{\frown} f) \stackrel{q-s}{\frown} f, (f\stackrel{2q-2r-s}{\frown}f)\stackrel{r}{\frown}f\rangle_{2r+2s-q}.
\]
\end{enumerate}
\end{prop}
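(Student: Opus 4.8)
The plan is to deduce all three identities from two elementary \emph{transfer} rules for a single contraction, valid for arbitrary (not necessarily mirror-symmetric) kernels. The first is the adjoint rule $(f\pt{r}g)^\ast=g^\ast\pt{r}f^\ast$; the second consists of the two adjunction formulas $\lla f\pt{r}g,h\rra=\lla f,h\pt{q_g-r}g^\ast\rra=\lla g,f^\ast\pt{p_f-r}h\rra$, where $p_f$ and $q_g$ denote the number of arguments of $f$ and $g$. Both are proved directly: one writes out the integral defining the left-hand side, renames the dummy variables, and observes that the order-reversal built into the definition (\ref{contr}) of $\pt{r}$ is exactly what makes the arguments line up. Since $f$ is mirror symmetric we have $f^\ast=f$ and $(f\pt{r}f)^\ast=f\pt{r}f$, so every adjoint that surfaces during the computation disappears immediately.

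With these rules in hand, the idea for each of the three parts is to regard both sides as one and the same \emph{closed} object: a full contraction of six copies of $f$, that is, a $(3q)$-fold integral of a product of six factors $f$ in which all $6q$ arguments are paired off. First I would expand each nested contraction $(f\pt{a}f)\pt{b}f$ into this integral, reading off from (\ref{contr}) which argument of which factor is glued to which; this is the step where the reversals must be tracked with care. I would then exhibit a single relabeling of the $3q$ integration variables, together with reflections of individual factors (each legitimate because $f=f^\ast$), carrying the pairing pattern of the left-hand side onto that of the right-hand side. Equivalently, the same passage can be organized as a short, explicit chain of applications of the transfer rules above.

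The hypotheses enter exactly as admissibility conditions. The constraints $r+s=r'+s'\leq q$, $r<r'$, together with the sign of $r'+s-q$ (and of $r+s-q$ in the third part), guarantee that every contraction order produced along the way is a nonnegative integer lying in its permitted range $\{0,\dots,\min(\cdot,\cdot)\}$; the three parts correspond precisely to the three possible regimes, which is why their right-hand sides take three different shapes. As a preliminary sanity check I would verify the argument counts on both sides: they agree ($2r+2s-q$ in parts (1) and (3), and $q+2r-2r'$ in part (2)), matching the stated subscripts. The genuine difficulty is not conceptual but combinatorial, since the whole argument is an index chase; the delicate point is to track argument positions faithfully through the successive reversals and to check admissibility at each transfer. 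I expect this bookkeeping, rather than any single inequality, to be the main obstacle.
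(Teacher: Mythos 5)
Your proposal is correct and is essentially the paper's own proof: the authors also expand each side of the three identities into a single $3q$-fold integral of a product of six copies of $f$, with all $6q$ arguments paired off, and then pass from one side to the other by Fubini together with a relabeling of the integration variables (in their write-up this is literally a cyclic re-reading of the six factors). Your two transfer rules are correct as stated and give a cleaner way to organize the same index chase; the paper instead does the bookkeeping by brute force, with explicit blocks ${\bf a},{\bf b},\ldots$ and their reversals.

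One substantive remark in your favor. You invoke $f=f^{\ast}$ to make the adjoints disappear, whereas the proposition as printed assumes only $f\in L^2(\R_+^q)$. In fact mirror symmetry is \emph{needed}: for general $f$ the identities can fail. Take $q=2$, $f=a\otimes b$ with $\|a\|=\|b\|=1$ and $\lla a,b\rra=c$; then $f\pt{1}f=c\,a\otimes b$ and $(f\pt{1}f)\pt{2}f=c^{3}$, so the left-hand side of part (3) with $r=1$, $s=2$ equals $c^{6}$, while the right-hand side is
\[
\lla (f\pt{1}f)\otimes f,\ (f\otimes f)\pt{1}f\rra_{4}=\lla c\,a\otimes b\otimes a\otimes b,\ c\,a\otimes b\otimes a\otimes b\rra_{4}=c^{2},
\]
and these differ unless $c\in\{0,\pm1\}$. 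The paper's own proof silently uses $f=f^{\ast}$ at the step where the reordered integral is re-read as the right-hand contraction pattern, and the proposition is only ever applied to the mirror-symmetric kernels $f_n$ of Theorem \ref{mainthm}, so nothing downstream is affected; but your explicit flagging of the hypothesis is the more careful formulation. The only thing missing from your proposal is the actual execution of the index chase, which you correctly identify as the entire content of the argument.
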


\begin{remark}
{\rm
The main interest of (1) is to pass from inner products involving functions
of $3q-2r-2s\geq q$ variables into inner products involving functions of $2r+2s-q\leq q$ variables only.
(A similar remark obviously holds for (2) and (3) as well.)
This nice property of double contractions is to play a major role in the proof of
Lemma \ref{lm1}.
}
\end{remark}

{\it Proof}. During all the proof, we use the following short-hand notation. For any integer $\alpha\geq 1$, we denote by
${\bf x}_\alpha$ the element of $\R_+^{\alpha}$ defined as ${\bf x}_\alpha=(x_1,\ldots,x_\alpha)$, whereas ${\bf x}_\alpha^*$ stands for
its mirror counterpart, that is,
${\bf x}_\alpha^*=(x_\alpha,\ldots,x_1)$. By extension, we allow $\alpha$ to be zero; in this case,
the implicit convention is to remove ${\bf x}_\alpha$, as well as ${\bf x}_\alpha^*$, in each expression containing them.
Also, we write $d{\bf x}_\alpha$ to indicate $dx_1\ldots dx_\alpha$. \\

1. Assume that $r,s,r',s'$ are such that
$r+s=r'+s'\leq q$, $r<r'$ and $r'+s\geq q$. Using among others the Fubini theorem, we then have
\begin{eqnarray*}
&&\langle (f \stackrel{r}{\frown} f) \stackrel{s}{\frown} f, (f\stackrel{r'}{\frown}f)\stackrel{s'}{\frown}f\rangle_{3q-2r-2s}\\
&=&\int_{\R_+^{3q}}
f({\bf a}_{q-r'},{\bf b}_{q-r'-s'},{\bf c}_{r'+s-q},{\bf x}_r)f({\bf x}_r^*,{\bf d}_{q-r-s},{\bf y}_s)
f({\bf y}_s^*,{\bf e}_{q-s})
\\
&&\hskip1cm\times
f({\bf e}^*_{q-s},{\bf d}^*_{q-r-s},{\bf c}^*_{r'+s-q},{\bf z}^*_{s'})
f({\bf z}_{s'},{\bf b}^*_{q-r'-s'},{\bf t}_{r'})
f({\bf t}_{r'}^*,{\bf a}^*_{q-r'})
\\
&&\hskip1cm\times
d{\bf x}_{r}d{\bf y}_{s}d{\bf z}_{s'}d{\bf t}_{r'}d{\bf a}_{q-r'}d{\bf b}_{q-r'-s'}d{\bf c}_{r'+s-q}d{\bf d}_{q-r-s}d{\bf e}_{q-s}\\
&=&\int_{\R_+^{3q}}
f({\bf x}_r^*,{\bf d}_{q-r-s},{\bf y}_s)
f({\bf y}_s^*,{\bf e}_{q-s})
f({\bf e}^*_{q-s},{\bf d}^*_{q-r-s},{\bf c}^*_{r'+s-q},{\bf z}^*_{s'})
\\
&&\hskip1cm\times
f({\bf z}_{s'},{\bf b}^*_{q-r'-s'},{\bf t}_{r'})
f({\bf t}_{r'}^*,{\bf a}^*_{q-r'})
f({\bf a}_{q-r'},{\bf b}_{q-r'-s'},{\bf c}_{r'+s-q},{\bf x}_r)
\\
&&\hskip1cm\times
d{\bf x}_{r}d{\bf y}_{s}d{\bf z}_{s'}d{\bf t}_{r'}d{\bf a}_{q-r'}d{\bf b}_{q-r'-s'}d{\bf c}_{r'+s-q}d{\bf d}_{q-r-s}d{\bf e}_{q-s}\\
&=&\langle (f \stackrel{s}{\frown} f) \stackrel{2q-2s-r}{\frown} f, (f\stackrel{q-r'}{\frown}f)\stackrel{q-s'}{\frown}f\rangle_{2r+2s-q}.
\end{eqnarray*}

2. Assume now that $r,s,r',s'$ are such that
$r+s=r'+s'\leq q$, $r<r'$ and $r'+s\leq q$. Similarly, we have
\begin{eqnarray*}
&&\langle (f \stackrel{r}{\frown} f) \stackrel{s}{\frown} f, (f\stackrel{r'}{\frown}f)\stackrel{s'}{\frown}f\rangle_{3q-2r-2s}\\
&=&\int_{\R_+^{3q}}
f({\bf a}_{q-r'},{\bf b}_{r'-r},{\bf x}_r)
f({\bf x}_r^*,{\bf c}_{q-s-r'},{\bf d}_{s-s'},{\bf y}_s)
f({\bf y}_s^*,{\bf e}_{q-s})
\\
&&\hskip1cm\times
f({\bf e}^*_{q-s},{\bf d}^*_{s-s'},{\bf z}^*_{s'})
f({\bf z}_{s'},{\bf c}_{q-s-r'}^*,{\bf b}^*_{r'-r},{\bf t}_{r'})
f({\bf t}_{r'}^*,{\bf a}^*_{q-r'})
\\
&&\hskip1cm\times
d{\bf x}_{r}d{\bf y}_{s}d{\bf z}_{s'}d{\bf t}_{r'}d{\bf a}_{q-r'}d{\bf b}_{r'-r}d{\bf c}_{q-s-r'}d{\bf d}_{s-s'}d{\bf e}_{q-s}\\
&=&\int_{\R_+^{3q}}
f({\bf x}_r^*,{\bf c}_{q-s-r'},{\bf d}_{s-s'},{\bf y}_s)
f({\bf y}_s^*,{\bf e}_{q-s})
f({\bf e}^*_{q-s},{\bf d}^*_{s-s'},{\bf z}^*_{s'})
\\
&&\hskip1cm\times
f({\bf z}_{s'},{\bf c}_{q-s-r'}^*,{\bf b}^*_{r'-r},{\bf t}_{r'})
f({\bf t}_{r'}^*,{\bf a}^*_{q-r'})
f({\bf a}_{q-r'},{\bf b}_{r'-r},{\bf x}_r)
\\
&&\hskip1cm\times
d{\bf x}_{r}d{\bf y}_{s}d{\bf z}_{s'}d{\bf t}_{r'}d{\bf a}_{q-r'}d{\bf b}_{r'-r}d{\bf c}_{q-s-r'}d{\bf d}_{s-s'}d{\bf e}_{q-s}\\
&=&\langle (f \stackrel{s}{\frown} f) \stackrel{q-s'}{\frown} f, (f\stackrel{q-r'}{\frown}f)\stackrel{2r'-r}{\frown}f\rangle_{q+2r-2r'}.
\end{eqnarray*}

3. Finally, assume that $r,s$ are such that
$r\leq q$, $s\leq q\wedge(2q-2r)$ and
$r+s\geq q$. We have this time
\begin{eqnarray*}
&&\|(f \stackrel{r}{\frown} f) \stackrel{s}{\frown} f\|^2_{3q-2r-2s}\\
&=&\int_{\R_+^{3q}}
f({\bf a}_{2q-2r-s},{\bf b}_{r+s-q},{\bf x}_r)
f({\bf x}_r^*,{\bf y}_{q-r})
f({\bf y}_{q-r}^*,{\bf b}^*_{r+s-q},{\bf c}_{q-s})
\\
&&\hskip1cm\times
f({\bf c}^*_{q-s},{\bf d}^*_{r+s-q},{\bf z}^*_{q-r})
f({\bf z}_{q-r},{\bf t}_{r})
f({\bf t}_{r}^*,{\bf d}_{r+s-q},{\bf a}^*_{2q-2r-s})
\\
&&\hskip1cm\times
d{\bf x}_{r}d{\bf y}_{q-r}d{\bf z}_{q-r}d{\bf t}_{r}d{\bf a}_{2q-2r-s}d{\bf b}_{r+s-q}d{\bf c}_{q-s}d{\bf d}_{r+s-q}\\
&=&\int_{\R_+^{3q}}
f({\bf x}_r^*,{\bf y}_{q-r})
f({\bf y}_{q-r}^*,{\bf b}^*_{r+s-q},{\bf c}_{q-s})
f({\bf c}^*_{q-s},{\bf d}^*_{r+s-q},{\bf z}^*_{q-r})
\\
&&\hskip1cm\times
f({\bf z}_{q-r},{\bf t}_{r})
f({\bf t}_{r}^*,{\bf d}_{r+s-q},{\bf a}^*_{2q-2r-s})
f({\bf a}_{2q-2r-s},{\bf b}_{r+s-q},{\bf x}_r)
\\
&&\hskip1cm\times
d{\bf x}_{r}d{\bf y}_{q-r}d{\bf z}_{q-r}d{\bf t}_{r}d{\bf a}_{2q-2r-s}d{\bf b}_{r+s-q}d{\bf c}_{q-s}d{\bf d}_{r+s-q}\\
&=&\langle (f \stackrel{q-r}{\frown} f) \stackrel{q-s}{\frown} f, (f\stackrel{2q-2r-s}{\frown}f)\stackrel{r}{\frown}f\rangle_{2r+2s-q}.
\end{eqnarray*}
\qed

\bigskip

We can now give the proof of Lemma \ref{lm1}. We obviously have
\begin{eqnarray*}
&&\sum_{k=\frac{q}2+1}^{\frac{3q}2-1} \left\|
\sum_{r\in\mathcal{B}_{2k}} (f_n\stackrel{r}{\frown} f_n) \stackrel{\frac{3q}2-k-r}{\frown} f_n
\right\|_{2k}^2
=\sum_{k=\frac{q}2+1}^{\frac{3q}2-1}
\sum_{r\in\mathcal{B}_{2k}} \left\|(f_n\stackrel{r}{\frown} f_n) \stackrel{\frac{3q}2-k-r}{\frown} f_n
\right\|^2_{2k} \\
&&\hskip1cm+2\sum_{k=\frac{q}2+1}^{\frac{3q}2-1}
\sum_{r<r'\in\mathcal{B}_{2k}}
\langle
(f_n \stackrel{r}{\frown} f_n) \stackrel{\frac{3q}2-k-r}{\frown} f_n, (f_n\stackrel{r'}{\frown} f_n)
\stackrel{\frac{3q}2-k-r'}{\frown} f_n\rangle_{2k},
\end{eqnarray*}
so we are left to show that
\begin{equation}\label{toprove}
2\sum_{k=\frac{q}2+1}^{\frac{3q}2-1}
\sum_{r<r'\in\mathcal{B}_{2k}}
\langle
(f_n \stackrel{r}{\frown} f_n) \stackrel{\frac{3q}2-k-r}{\frown} f_n, (f_n\stackrel{r'}{\frown} f_n)
\stackrel{\frac{3q}2-k-r'}{\frown} f_n\rangle_{2k}=
2\sum_{k=0}^{\frac{q}2-1} \left\|
\sum_{r\in\mathcal{B}_{2k}} (f_n\stackrel{r}{\frown} f_n) \stackrel{\frac{3q}2-k-r}{\frown} f_n
\right\|^2_{2k}.
\end{equation}
To achieve this goal, let us decompose the left-hand side of (\ref{toprove}) as follows:
\begin{eqnarray*}
2\sum_{k=\frac{q}2+1}^{\frac{3q}2-1}
\sum_{r<r'\in\mathcal{B}_{2k}} (\ldots)
=2\sum_{l=1}^{\frac{q}2} \sum_{k=\frac{q}2+l}^{\frac{3q}2-l}
\sum_{\stackrel{r\in\mathcal{B}_{2k} \mbox{ \tiny{s.t.}}}{\stackrel{r+l\in\mathcal{B}_{2k}}{}}}
{\bf 1}_{\{r'=r+l\}} (\ldots)+
2\sum_{l=1}^{\frac{q}2} \sum_{r'>r+l \in\mathcal{B}_{q+2l}} (\ldots)
=(1)+(2).
\end{eqnarray*}
(The second sum in (1) finishes at $\frac{3q}2-l$ instead of $\frac{3q}2-1$, because $r'=r+l$ with $r,r'\in\mathcal{B}_{2k}$ implies $k\leq \frac{3q}2-l$.)
Using Lemma \ref{lm1} (point 2), we have
\begin{eqnarray*}
(1)&=&2\sum_{l=1}^{\frac{q}2}
\sum_{k=\frac{q}2+l}^{\frac{3q}2-l}
\sum_{\stackrel{r\in\mathcal{B}_{2k}\mbox{ \tiny{s.t.}}}{\stackrel{r+l\in\mathcal{B}_{2k}}{}}} \langle (f_n \stackrel{r}{\frown} f_n) \stackrel{\frac{3q}{2}-k-r}{\frown}f_n, (f_n \stackrel{r+l}{\frown} f_n) \stackrel{\frac{3q}{2}-k-r-l}{\frown}f_n \rangle_{2k} \\
&=&2\sum_{l=1}^{\frac{q}2}
\sum_{k=\frac{q}2+l}^{\frac{3q}2-l}
\sum_{\stackrel{r\in\mathcal{B}_{2k}\mbox{ \tiny{s.t.}}}{\stackrel{r+l\in\mathcal{B}_{2k}}{}}}
\langle
(f_n \stackrel{\frac{3q}2-k-r}{\frown} f_n) \stackrel{k+r+l-\frac{q}{2}}{\frown} f_n,
(f_n\stackrel{q-r-l}{\frown} f_n) \stackrel{r+2l}{\frown} f_n
\rangle_{q-2l}\\
&=&2\sum_{l=1}^{\frac{q}{2}} \sum_{s\leq s'\in\mathcal{B}_{q-2l}}
\langle
(f_n \stackrel{s}{\frown} f_n) \stackrel{q+l-s}{\frown} f_n,
(f_n\stackrel{s'}{\frown} f_n) \stackrel{q+l-s'}{\frown} f_n
\rangle_{q-2l}\\
&=&\sum_{l=1}^{\frac{q}2}\left(
\sum_{s\in\mathcal{B}_{q-2l}}
\left\|
(f_n \stackrel{s}{\frown} f_n) \stackrel{q+l-s}{\frown} f_n
\right\|_{q-2l}^2
+\left\|
\sum_{s\in\mathcal{B}_{q-2l}} (f_n \stackrel{s}{\frown} f_n) \stackrel{q+l-s}{\frown} f_n
\right\|_{q-2l}^2
\right)\\
&=&\sum_{k=0}^{\frac{q}2-1}\left(
\sum_{r\in\mathcal{B}_{2k}}
\left\|
(f_n \stackrel{r}{\frown} f_n) \stackrel{\frac{3q}2-k-r}{\frown} f_n
\right\|_{2k}^2
+\left\|
\sum_{r\in\mathcal{B}_{2k}} (f_n \stackrel{r}{\frown} f_n) \stackrel{\frac{3q}2-k-r}{\frown} f_n
\right\|_{2k}^2
\right).
\end{eqnarray*}
On the other hand, Lemma \ref{lm1} (point 1) leads to
\begin{eqnarray*}
(2)&=&2\sum_{l=1}^{\frac{q}2}
\sum_{\stackrel{r,r'\in\mathcal{B}_{q+2l}}{\stackrel{r'>r+l}{}}}
\langle
(f_n \stackrel{r}{\frown} f_n) \stackrel{q-l-r}{\frown} f_n,
(f_n\stackrel{r'}{\frown} f_n) \stackrel{q-l-r'}{\frown} f_n
\rangle_{q+2l}\\
&=& 2\sum_{l=1}^{\frac{q}2}
\sum_{\stackrel{r,r'\in\mathcal{B}_{q+2l}}{\stackrel{r'>r+l}{}}}
\langle
(f_n \stackrel{q-l-r}{\frown} f_n) \stackrel{2l+r}{\frown} f_n,
(f_n\stackrel{q-r'}{\frown} f_n) \stackrel{l+r'}{\frown} f_n
\rangle_{q-2l}\\
&=&2\sum_{l=1}^{\frac{q}{2}} \sum_{s< s'\in\mathcal{B}_{q-2l}}
\langle
(f_n \stackrel{s}{\frown} f_n) \stackrel{q+l-s}{\frown} f_n,
(f_n\stackrel{s'}{\frown} f_n) \stackrel{q+l-s'}{\frown} f_n
\rangle_{q-2l}\\
&=&2\sum_{k=0}^{\frac{q}2-1}
\sum_{r<r'\in\mathcal{B}_{2k}}
\langle
(f_n \stackrel{r}{\frown} f_n) \stackrel{\frac{3q}2-k-r}{\frown} f_n,
(f_n \stackrel{r'}{\frown} f_n) \stackrel{\frac{3q}2-k-r'}{\frown} f_n
\rangle_{q-2l}.
\end{eqnarray*}

Finally,
\[
(1)+(2) = 2\sum_{k=0}^{\frac{q}2-1} \left\|
\sum_{r\in\mathcal{B}_{2k}} (f_n \stackrel{r}{\frown} f_n) \stackrel{\frac{3q}2-k-r}{\frown} f_n
\right\|^2_{2k},
\]
and the proof of Lemma \ref{lm1} is concluded. \qed

\medskip

\noindent
{\bf Acknowledgement.} We thank Samy Tindel for suggesting us the name of the hero of this paper...
the tetilla law!
We thank an anonymous referee for his/her constructive remarks.
We are grateful to Roland Speicher for helpful comments and for bringing the reference \cite{duke} to our notice,
to G\'erard Le Caer for having guessed the formula (\ref{mom-exp}) and to Jean-S\'ebastien Giet
for helping us to prove it rigourously.

\end{document}